\documentclass[11pt,a4paper,notitlepage,oneside]{amsart}
\usepackage{etoolbox}
\usepackage{microtype} 
\usepackage{amsmath}
\usepackage{amsthm}
\usepackage{braket}
\usepackage{pdfpages}
\usepackage[psamsfonts]{amssymb}
\usepackage[T1]{fontenc}
\usepackage[english]{babel}
\usepackage[utf8]{inputenc}
\usepackage{enumitem}
\usepackage{mathtools}
\usepackage{caption,subfig} 
\usepackage{rotating} 
\usepackage{scalerel}
\usepackage{relsize} 
\usepackage{cancel} 

\usepackage[autostyle,italian=guillemets]{csquotes}

\allowdisplaybreaks 

\usepackage{pgf,tikz,pgfplots}
\usepackage{mathrsfs}
\usetikzlibrary{arrows}

\usepackage[hyphenbreaks]{breakurl}

\usepackage{fancyhdr}
\usepackage{tikz-cd}
\usepackage{amscd}

\usepackage[a4paper, bottom=3cm, left=3cm,right=3cm]{geometry}
\usepackage{hyperref}
\hypersetup{hidelinks} 

\usepackage{subfiles}
\numberwithin{equation}{section}

\usepackage[style=alphabetic,sorting=nyt,firstinits=true,maxnames=5,maxalphanames=5,backend=biber,doi=false, url=false, isbn=false]{biblatex}
{\theoremstyle{plain}\newtheorem{Def}{Definition}[section]}
\newtheorem{lem}[Def]{Lemma}
\newtheorem{teor}[Def]{Theorem}
\newtheorem{Cor}[Def]{Corollary}
\newtheorem{prop}[Def]{Proposition}
\newtheorem{conj}[Def]{Conjecture}
{\theoremstyle{remark} \newtheorem{Rem}[Def]{\bf Remark}}
{\theoremstyle{remark} \newtheorem{Exa}[Def]{Example}}
\newtheorem*{rem*}{Remark}

\newcommand{\sqH}{H_{\scriptscriptstyle(2)}}
\newcommand{\ZZ}{\mathbb{Z}}
\newcommand{\QQ}{\mathbb{Q}}

\newcommand{\RR}{\mathbb{R}}
\newcommand{\CC}{\mathbb{C}}

\newcommand{\HH}{\mathbb{H}}
\newcommand{\A}{\mathbb{A}}
\newcommand{\WW}{\mathbb{W}}

\DeclareMathOperator{\SL}{SL}

\DeclareMathOperator{\tr}{tr}

\newcommand{\vect}[1]{\boldsymbol{\mathrm{#1}}}

\def\XXint#1#2#3{{\setbox0=\hbox{$#1{#2#3}{\int}$}
     \vcenter{\hbox{$#2#3$}}\kern-.5\wd0}}

\newcommand{\Orth}{\mathrm{O}}
\newcommand{\Sp}{\operatorname{Sp}}

\title{%
     The Siegel--Weil formula in geometry and arithmetic
}
\author{Jan Bruinier}
\address{Jan Bruinier: Fachbereich Mathematik, Technische Universität Darmstadt, D-64289 Darmstadt,  Deutschland.}
\email{bruinier@mathematik.tu-darmstadt.de}
\author{Riccardo Zuffetti}
\address{Riccardo Zuffetti: Fachbereich Mathematik, Technische Universität Darmstadt, D-64289 Darmstadt,  Deutschland.}
\email{zuffetti@mathematik.tu-darmstadt.de}

\begin{document}

    \maketitle
    \thispagestyle{empty}

    \normalsize
    \setcounter{tocdepth}{1}


\begin{abstract}
The present paper is an extended version of the lecture notes of a course given by the first author at the summer school on \emph{Formulas of Siegel and Weil}  (Bielefeld, September 2025).
We survey three perspectives on the Siegel--Weil formula: classical, geometric, and arithmetic.
We first recall the Siegel--Weil formula for elliptic theta series arising from positive definite lattices. We next discuss the higher genus case for lattices of arbitrary signature from an adelic viewpoint.
After introducing orthogonal Shimura varieties, we present the geometric Siegel--Weil formula, in which the generating series of volumes of special cycles is shown to be an Eisenstein series.
We conclude with the (partly conjectural)  arithmetic Siegel--Weil formula, relating degrees of special cycles in arithmetic Chow groups to central derivatives of Eisenstein series.

\end{abstract}

\tableofcontents

\section{Introduction}

The Siegel--Weil formula is one of the most important identities relating 
the arithmetic of quadratic forms and automorphic forms. In its classical form, it says that a suitable average of theta series attached to positive definite lattices in a fixed genus is an Eisenstein series. Since the Fourier coefficients of theta series count representation numbers of quadratic forms, and since the Fourier coefficients of Eisenstein series can be often explicitly computed, this identity provides a bridge between the arithmetic of lattices and the theory of modular forms.

The purpose of the present paper is to give an introduction to the classical Siegel--Weil formula and explain its geometric and arithmetic extensions relating special cycles on orthogonal  Shimura varieties to Siegel modular forms.

We begin with the case of positive definite even lattices, recalling the construction of scalar-valued theta series and the classical Siegel--Weil formula for unimodular lattices. We then pass to non-unimodular lattices, where theta functions naturally become vector-valued modular forms for the Weil representation.

In the next step, we introduce the adelic formulation of the Siegel--Weil formula.
Although the adelic approach introduces some extra technical language,
this formulation is well-suited to simultaneously treat higher genus (Siegel) modular forms and quadratic spaces of arbitrary signature. The basic statement is that the theta integral obtained by averaging a theta function over an orthogonal group is equal to a special value of a specific Siegel Eisenstein series.

We then turn to the geometric version of the theory developed by Hirzebruch--Zagier, Oda, Kudla--Millson among others, see e.g.~\cite{hirzebruchzagier}, \cite{kudlamillson}. For quadratic spaces of signature $(n,2)$, the associated orthogonal Shimura varieties carry so-called special cycles.
These algebraic cycles include and generalize classical examples such as Heegner points, Hirzebruch--Zagier divisors on Hilbert modular surfaces, Humbert surfaces in Siegel threefolds, and Noether--Lefschetz loci in moduli spaces of K3-surfaces.
In the indefinite case, the representation numbers of any given lattice are typically infinite. The correct replacement is given by cohomology classes  of special cycles. The Kudla--Millson theory shows that the generating series of these classes converges to a modular form. The geometric Siegel--Weil formula identifies the generating series of their volumes with an Eisenstein series.

Finally, we discuss the arithmetic Siegel--Weil formula, which is part of Kudla's program on special cycles on integral models of Shimura varieties, see e.g.~\cite{Kudla-Annals}, \cite{Kudla-MSRI}. Here special cycles are replaced by arithmetic cycles in the sense of Gillet--Soulé, and one considers generating series taking values in arithmetic Chow groups. The analogue of the geometric volume is the arithmetic degree, and instead of an Eisenstein series one considers a derivative of a Siegel Eisenstein series (with respect to a spectral parameter) associated with an incoherent quadratic space.
 Thus the arithmetic Siegel--Weil formula predicts a relation between arithmetic intersection numbers and central derivatives of Siegel Eisenstein series.

Throughout these notes, we tried to keep the informal style of the lectures. In particular, we prioritize clarity over full generality and occasionally impose stronger hypotheses than necessary to simplify the exposition.
Furthermore, we assume that the reader is familiar with the theory of elliptic modular forms and the basics of Siegel modular forms.
Apart from the precourse notes of the summer school, 
we refer to~\cite{bump},  \cite{1-2-3}, \cite{koecherkrieg} for more details on the theory of elliptic modular forms, to~\cite{freitag},  \cite{klingen}  for background on Siegel modular forms, and to~\cite{pitale} for a more adelic and representation-theoretic point of view.

\subsection*{Acknowledgments}
We are grateful to Claudia Alfes and Ana Botero for organizing the very stimulating summer school \emph{Formulas of Siegel and Weil} in Bielefeld.
We also thank Marco Eibrink for typesetting a first draft of these notes. Both authors are  supported in part by  the DFG Collaborative Research Centre TRR 326 ``Geometry and Arithmetic of Uniformized Structures'', project number 444845124.
The second author is supported by the DFG Research Grant (Eigene Stelle) ``Arithmetic and geometry of the Kudla--Millson theta function'', project number 554793187.

\section{Classical theta series and the Siegel--Weil formula}

Let $(L,Q)$ be an \emph{even lattice} over $\ZZ$, i.e., $L$ is a free $\ZZ$-module of rank $n$ and $Q\colon L\rightarrow \ZZ$ is a quadratic form.
By $(x,y)=Q(x+y)-Q(x)-Q(y)$ we denote the associated bilinear form. Moreover, we assume that $Q$ is non-degenerate, i.e., $L^\bot =\{0\}$.

A $\ZZ$-linear map of lattices $f\colon (L_1,Q_1)\rightarrow (L_2,Q_2)$ is called \emph{isometric} if $Q_2(f(x))=Q_1(x)$ for all $x\in L_1$.
Two lattices $(L_1,Q_1)$ and $(L_2,Q_2)$ are called \emph{isometric} if there exists an isometric isomorphism $(L_1,Q_1)\rightarrow (L_2,Q_2)$. In this case, we write $L_1\simeq L_2$.

For any prime $p$, we denote
\[
    L_p=L\otimes_\ZZ \ZZ_p\quad\text{and}\quad L_\infty=L\otimes_\ZZ\RR.
\]
The quadratic form~$Q$ of $L$ naturally extends to a quadratic form~$Q_p$ on $L_p$ for every prime~$p$ and $p=\infty$.

Let $b_1,\dots,b_n$ be a basis of $L$.
The \emph{Gram matrix} of $L$ with respect to that basis is the matrix $S=\left((b_i,b_j)\right)_{i,j}\in\ZZ^{n\times n}$.
The \emph{signature}~$\mathrm{sig}(L)$ and the \emph{determinant} $\mathrm{det}(L)$ of $L$ are respectively the signature and the determinant of~$S$.
Both values do not depend on the choice of the basis of~$L$, and are invariant under isometric isomorphisms.
We say that~$L$ is \emph{unimodular} if $\mathrm{det}(L)=\pm 1$.

\begin{Rem}\label{rem:unimodsign}
    An even unimodular lattice~$L$ of signature~$(b^+,b^-)$ exists if and only if~$b^+-b^-\equiv 0$ mod~$8$.
    In particular, every positive definite even unimodular lattice $L$ has rank~$n$ divisible by~$8$. See~\cite{serre} and~\cite{nikulin} for further details.
\end{Rem}

Note that~$\mathrm{sig}(L_1)=\mathrm{sig}(L_2)$ if and only if~$(L_1)_\infty \simeq(L_2)_\infty$.
More explicitly, if $\mathrm{sig}(L)=(b^+,b^-)$ with~$n=b^++b^-$, then~$L_\infty\simeq \RR^{b^+,b^-}$, where $\RR^{b^+,b^-}$ is the vector space $\RR^n$ endowed with the quadratic form
\[
Q(\vect{x})\coloneq x_1^2 + \dots +x_{b^+}^2 - x_{b^++1}^2 - \dots - x_n^2.
\]

\begin{Def}
    A lattice $(L_1,Q_1)$ is said to be \emph{in the genus of} $(L,Q)$ if $(L_1)_p\simeq L_p$ for all primes $p< \infty$ and for~$p=\infty$.
    We denote by $\operatorname{gen}(L,Q)$ the set of lattices in the genus of~$(L,Q)$.
\end{Def}

Let $L_\QQ=L\otimes_\ZZ \QQ$.

\begin{Rem}
    \begin{enumerate}
        \item If $(L_1,Q_1)\in\operatorname{gen}(L_2,Q_2)$, then $L_1\otimes_\ZZ \QQ_p\cong L_2\otimes_\ZZ \QQ_p$ for all $p\leq\infty$. By the Hasse--Minkowski Theorem~\cite[Theorem~6.2]{schulzepillot}, we deduce that $(L_1)_\QQ\simeq (L_2)_\QQ$. Therefore, we may assume that the lattices $L_1$ and $L_2$ are embedded in the same rational quadratic space.
        \item If two lattices are isometric, then they lie in the same genus. The converse is not true in general.
        \item
        Lattices in the same genus have the same determinant. Since by~\cite[Theorem~6.10]{schulzepillot} there are only finitely many isometry classes of lattices of fixed determinant and signature, the genus $\operatorname{gen}(L,Q)$ consists of finitely many isometry classes.
    \end{enumerate}
\end{Rem}
From now on we will frequently write $\operatorname{gen}(L)$ instead of $\operatorname{gen}(L,Q)$.

\begin{Def}
    The \emph{dual lattice} of $L$ is 
    \[
        L'\coloneq \left\{x\in L_\QQ\mid \; \text{$(x,y)\in\ZZ$\, for all $y\in L$}\right\}.
    \]
    Moreover, the \emph{level} of $L$ is defined as
    \[
        N\coloneq\min\left\{M\in\ZZ_{>0}\mid \; \text{$MQ(x)\in\ZZ$\, for all $x\in L'$}\right\}.
    \]
\end{Def}

One can show that:
\begin{itemize}
    \item 
The lattice $L$ is a finite index subgroup of $L'$.
\item $L$ is unimodular if and only if~$L=L'$.
Therefore, unimodular lattices are also called \emph{self-dual} lattices.
\item $L'/L$ is a finite abelian group, called the \emph{discriminant group}.
Its cardinality equals~$\lvert\mathrm{det}(L)\rvert$.
\item The quadratic form 
$Q$ induces a $\QQ/\ZZ$-valued quadratic form $\bar{Q}\colon L'/L\rightarrow \QQ/\ZZ$.
\item The discriminant group $L'/L$ decomposes as an orthogonal direct sum of $p$-subgroups as
        \[
            L'/L=\bigoplus_{p<\infty
            } L_p'/L_p.
        \]
\end{itemize}

The \emph{automorphism group} of the finite quadratic module $(L'/L,\bar{Q})$ is defined as 
\[
    \Orth(L'/L,\bar{Q})\coloneq\left\{f\in \mathrm{Aut}(L'/L)\,\mid\;  \bar{Q}(f(x))=\bar{Q}(x) \text{ for all } x\in L'/L\right\},
\]
where $\mathrm{Aut}(L'/L)$ denotes the group of $\ZZ$-linear automorphisms of~$L'/L$.
If the quadratic form is clear from the context, we will sometimes drop it from the notation and simply write~$\mathrm{O}(L'/L)$.

        For all $M\in\operatorname{gen}(L)$, we have $M'/M\simeq L'/L$ as finite quadratic modules.
        According to  \cite[Corollary 1.9.4]{nikulin} the converse also holds: If~$\operatorname{sig}(M)=\operatorname{sig}(L)$ and $M'/M\simeq L'/L$ as finite quadratic modules, then $M\in\operatorname{gen}(L)$.

\subsection{Theta series for positive definite lattices}
In this section we assume~$(L,Q)$ to be a \emph{positive definite} lattice of rank~$n$, so that~$\mathrm{sig}(L)=(n,0)$.
We also assume that~$n$ is \emph{even}.
Although one could drop the latter assumption and work with metaplectic double covers of congruence subgroups, for the sake of clarity we prefer to illustrate the theory in the simpler even rank case.

Let~$\HH=\{\tau\in\CC \,|\,\Im(\tau)>0\}$ denote the Poincaré upper half-plane.
\begin{Def}
    The \emph{theta series} associated with the lattice $L$ is defined by 
    \[
        \theta_L(\tau)=\sum_{x\in L}e^{2\pi i Q(x)\tau},\qquad \tau\in\HH. 
    \]
    More generally, for any coset $\mu\in L'/L$ we define
    \begin{equation}\label{eq:thetasermu}
        \theta_{\mu+L}(\tau)=\sum_{x\in\mu+L}e^{2\pi i Q(x)\tau }.
    \end{equation}
\end{Def}
\begin{Rem}
\begin{enumerate}
    \item Since $L$ is positive definite, the theta series $\theta_{\mu+L}$ converges normally on $\HH$.
    This implies that~$\theta_{\mu+L}$ is a holomorphic function with respect to~$\tau$, see~\cite[Section~2.4]{zuffetti} for details.
    \item Let $\Delta=(-1)^{n/2}|L'/L|=(-1)^{n/2}\mathrm{det}(L)$ be the \emph{discriminant} of~$L$.
    In this case we have $\Delta\equiv 0,1$ mod~$4$ and $\chi_\Delta (a)=\big(\frac{\Delta}{a}\big)$ is a quadratic Dirichlet character modulo $N$, where $N$ is the level of $L$.
    Here we denoted by~$\big(\frac{\,\cdot\,}{\cdot}\big)$ the Kronecker symbol.
\end{enumerate}
\end{Rem}

\begin{teor}[Theta transformation formula]\label{thm:thetatrfor}
   We have $\theta_L\in M_{n/2}(\Gamma_0(N),\chi_\Delta)$, i.e., $\Theta_L$ is a holomorphic modular form of weight $n/2$ for $\Gamma_0(N)$ with Nebentypus $\chi_\Delta$.
    Moreover, $\theta_{\mu+L}\in M_{n/2}(\Gamma(N))$, i.e.,  it is a holomorphic modular form for the principal congruence subgroup of level $N$.
\end{teor}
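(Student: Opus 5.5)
The plan is to prove the vector-valued statement first and then deduce both claims from it. Put $\Theta_L=(\theta_{\mu+L})_{\mu\in L'/L}$. It suffices to know how this vector transforms under the generators $T=\left(\begin{smallmatrix}1&1\\0&1\end{smallmatrix}\right)$ and $S=\left(\begin{smallmatrix}0&-1\\1&0\end{smallmatrix}\right)$ of $\SL_2(\ZZ)$.

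For $T$, we have $\theta_{\mu+L}(\tau+1)=e(\bar Q(\mu))\,\theta_{\mu+L}(\tau)$, where $e(z)=e^{2\pi i z}$. This holds because $Q(\mu+\lambda)\equiv Q(\mu)\pmod{\ZZ}$ for $\lambda\in L$, using that $L$ is even and $(\mu,\lambda)\in\ZZ$.

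For $S$, apply Poisson summation to the Schwartz function $f(x)=e(Q(x)\tau)$ on $L_\infty\cong\RR^n$, summed over the shifted lattice $\mu+L$. The Fourier transform of a Gaussian, with respect to the pairing $(\cdot,\cdot)$, has dual lattice $L'$ and covolume $\sqrt{|L'/L|}$. This gives
\[
\theta_{\mu+L}(-1/\tau)=\frac{(\tau/i)^{n/2}}{\sqrt{|L'/L|}}\sum_{\nu\in L'/L}e(-(\mu,\nu))\,\theta_{\nu+L}(\tau).
\]
The identity is first proved for $\tau=iy$ and then extended to $\HH$ by analytic continuation. Since $n$ is even and the signature is $(n,0)$, $(\tau/i)^{n/2}=i^{-n/2}\tau^{n/2}$ involves no square-root ambiguity. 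So $\Theta_L$ transforms under $\SL_2(\ZZ)$ with weight $n/2$ via a finite-dimensional representation $\rho_L$ (the Weil representation) on $\CC[L'/L]$.

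Next I show that $\rho_L$ is trivial on $\Gamma(N)$. The cleanest route is to compute $\rho_L(\gamma)$ explicitly for $\gamma\in\Gamma_0(N)$:
\[
\rho_L(\gamma)\mathfrak e_\mu=\chi_\Delta(d)\,e\bigl(bd\,\bar Q(\mu)\bigr)\,\mathfrak e_{d\mu}.
\]
For $\gamma\in\Gamma(N)$ we have $b\equiv0$ and $d\equiv1 \pmod N$. Since $N\bar Q\equiv0$ and $N$ kills $L'/L$, the action becomes $\mathfrak e_\mu\mapsto\chi_\Delta(d)\mathfrak e_\mu$ with $\chi_\Delta(d)=1$. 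This yields $\theta_{\mu+L}|_{n/2}\gamma=\theta_{\mu+L}$.

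The $\mu=0$ component of the $\Gamma_0(N)$ formula gives $\theta_L|_{n/2}\gamma=\chi_\Delta(d)\theta_L$. Holomorphy on $\HH$ is the normal convergence noted in the remark before the theorem. Holomorphy at the cusps follows from the $S$- and $T$-formulas: the slash of each component by any $\gamma\in\SL_2(\ZZ)$ is a finite combination of the $\theta_{\nu+L}$. Each of these has a Fourier expansion in nonnegative powers of $q^{1/N}$, and its coefficients count representations, so they grow polynomially.

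The main obstacle is the explicit formula for $\rho_L$ on $\Gamma_0(N)$, in particular producing the Kronecker character $\chi_\Delta(d)$; this cannot be read off from the generators directly. I would try to avoid computing $\rho_L$ as a word in $S$ and $T$. Instead, for $c\neq0$, I would write $\theta_L(\gamma\tau)$ by splitting $x\in L$ modulo $cL$ and applying the same Poisson summation to each class. This gives $\theta_L|\gamma=\chi\cdot\theta_L$, where $\chi$ is a normalized Gauss sum
\[
\frac{1}{c^{n/2}\sqrt{\det L}}\sum_{x\in L/cL}e\bigl(aQ(x)/c\bigr)
\]
times eighth roots of unity. Evaluating this Gauss sum by diagonalising $Q$ over $\ZZ_p$ for $p\mid c$ and using quadratic reciprocity identifies it with $\chi_\Delta(d)$. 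That evaluation is the delicate step, especially at $p=2$. Alternatively, one may cite the standard treatment of this Gauss-sum computation, as in \cite{zuffetti} or \cite{koecherkrieg}.
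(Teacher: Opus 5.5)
The paper gives no proof of this theorem. It is quoted as a classical fact, as is the vector-valued version $\theta_{L'/L}\in M_{n/2,\rho_L}$ in Section~\ref{sec:vvSWclas}, so there is no argument to compare with. Your outline is the classical Hecke--Schoeneberg proof, and the parts you actually carry out are correct:
\begin{itemize}
\item the $T$- and $S$-formulas (your factor $(\tau/i)^{n/2}=e(-n/8)\tau^{n/2}$ and the sign $e(-(\mu,\nu))$ agree with the paper's $\rho_L$);
\item the deduction of both claims from your displayed $\Gamma_0(N)$-formula, which correctly uses that $N$ kills $L'/L$, that $d$ is invertible on $L'/L$, and that $\chi_\Delta$ is a character mod $N$;
\item the argument at the cusps.
\end{itemize}

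What is missing is a derivation of that $\Gamma_0(N)$-formula, and your sketch does not match what you use. For the $\Gamma(N)$-claim you need the formula on every $e_\mu$. However, you only describe the Gauss-sum computation for $\theta_L$, and you omit the step where $N\mid c$ enters. Do it for $\theta_{\mu+L}$, with $c>0$; for $c<0$ pass to $-\gamma$, using $\chi_\Delta(-1)=(-1)^{n/2}$. Write $x=\mu+r+c\lambda$ with $r\in L/cL$ and $\lambda\in L$, and apply Poisson summation in $\lambda$. This gives
\[
\theta_{\mu+L}(\gamma\tau)=\frac{(-i)^{n/2}(c\tau+d)^{n/2}}{c^{n/2}\sqrt{|L'/L|}}\sum_{y\in L'}e(Q(y)\tau)\,G_\mu(y),\qquad G_\mu(y)=\sum_{r\in L/cL}e\Bigl(\frac{aQ(\mu+r)+(y,\mu+r)+dQ(y)}{c}\Bigr).
\]
For $z\in L'$ we have $cz\in L$ because $N\mid c$. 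The substitution $r\mapsto r+cz$ then shows $G_\mu(y)=e\bigl((a\mu+y,z)\bigr)G_\mu(y)$. Hence $G_\mu(y)=0$ unless $y\in -a\mu+L$, and for those $y$ one computes $G_\mu(y)=e(abQ(\mu))\sum_{r\in L/cL}e(aQ(r)/c)$.

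Therefore $\theta_{\mu+L}|_{n/2}\gamma=\chi(\gamma)\,e(abQ(\mu))\,\theta_{-a\mu+L}$, and $\theta_{-a\mu+L}=\theta_{a\mu+L}$. Here the \emph{same} normalized Gauss sum $\chi(\gamma)=(-i)^{n/2}c^{-n/2}|L'/L|^{-1/2}\sum_{r\in L/cL}e(aQ(r)/c)$ occurs for all $\mu$. This is exactly your formula for $\rho_L(\gamma)$, and it makes the appeal to $\rho_L$ unnecessary except at the cusps.

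So only one evaluation remains, namely $\chi(\gamma)=\chi_\Delta(d)$. This is the real content of the theorem. Deferring it to the literature is in line with the paper, which cites the whole statement, but your argument is then an outline rather than a proof. If you carry it out, note that an even lattice is in general not diagonalizable over $\ZZ_2$. Its Jordan splitting contains $2\times 2$ blocks, which must be handled separately in the $2$-adic Gauss sum.
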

The dimension of $M_{n/2}(\Gamma_0(N),\chi_\Delta)$ is small when $n$ and $N$ are small.
If there are no non-trivial  cusp forms in this space, one can identify $\theta_L$ with an explicit linear combination of Eisenstein series.
In this case, one can deduce formulas for the \emph{representation numbers}
\[
    r_L(m)=\#\{x\in L\mid \; Q(x)=m\},
\]
for~$m\in\ZZ_{\geq0}$, using the well-known formulas for the Fourier coefficients of Eisenstein series.

We denote by $E_k$  the (normalized) \emph{Eisenstein series} for the full modular group $\Gamma(1)=\operatorname{SL}_2(\ZZ)$, defined as
\begin{align*}
    E_k(\tau)=\sum_{\left(\begin{smallmatrix}a&b\\ c&d\end{smallmatrix}\right)\in\Gamma_\infty\backslash\operatorname{SL}_2(\ZZ)}(c\tau+d)^{-k}=1-\frac{2k}{B_k}\sum_{n=1}^\infty\sigma_{k-1}(n)q^ n,\qquad q=e^{2\pi i\tau},\,\tau\in\HH.
\end{align*}
Here $B_k$ is the $k$-th Bernoulli number, $\sigma_{k-1}(n)=\sum_{d|n}d^{k-1}$ is the generalized divisor sum, and
\[
\Gamma_\infty =\left\{ \pm \begin{pmatrix} 1&m\\0 & 1\end{pmatrix} \,\mid \, m\in \ZZ \right\}
\]
is the subgroup of~$\SL_2(\ZZ)$ stabilizing the cusp $\infty$.

\begin{Exa}
    Let $L$ be the (unique up to isometry) even unimodular positive definite lattice of rank 8. Then Theorem~\ref{thm:thetatrfor} holds with $n=8$, $N=1$, $\Delta=1$, and $M_4(\Gamma(1))$ is the $1$-dimensional complex space spanned by the Eisenstein series $E_4$. Since the Fourier coefficients of index~$0$ of $E_4$ and $\theta_L$ coincide, we deduce that~$\theta_L=E_4$.
    Therefore, we obtain the explicit formula $r_L(m)=240\sigma_3(m)$ for the representation numbers of~$L$.
\end{Exa}

If there are non-trivial cusp forms, there are usually no such simple formulas for $r_L(m)$. However, one can still get asymptotic results for $r_L(m)$ as $m$ increases.
The purpose of the Siegel--Weil formula is to pinpoint a suitable average of the theta functions arising from the isometry classes of the lattices in $\mathrm{gen}(L)$, which turns out to be an Eisenstein series.

Let $\operatorname{O}(L)=\{f\colon L\rightarrow L\mid  f\textup{ isometry}\}$ be the orthogonal group of $L$.
Since~$L$ is positive definite, the group $\mathrm{O}(L)$ is finite.

\begin{teor}[Siegel]\label{thm:SWf-scalvalc}
    Let $L$ be an even positive definite unimodular lattice of rank $n$. Then
    \begin{equation}\label{eq;clsiegwe}
        \sum_{M\in\operatorname{gen}(L)/\simeq}\frac{1}{|\Orth(M)|}\theta_M(\tau)=\bigg(\sum_{M\in\operatorname{gen}(L)/\simeq}\frac{1}{|\Orth(M)|}\bigg)E_{n/2}(\tau).
    \end{equation}
\end{teor}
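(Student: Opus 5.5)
Write $F(\tau)=\sum_{M}\frac{1}{|\Orth(M)|}\theta_M(\tau)$ for the left-hand side of \eqref{eq;clsiegwe}, with the sum over $\operatorname{gen}(L)/\simeq$. The plan is to show that $F$ minus the stated multiple of $E_{n/2}$ is a cusp form, and then that this cusp form is orthogonal to all cusp forms. Every $M$ in the genus is even unimodular of rank $n$, so $N=1$ and $\Delta=1$. By Theorem~\ref{thm:thetatrfor}, each $\theta_M$, and hence $F$, lies in $M_{n/2}(\Gamma(1))$. Recall $n\equiv 0 \bmod 8$ by Remark~\ref{rem:unimodsign}, so $n/2\geq 4$ is even.

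First step. Since $M_k(\Gamma(1))=\CC E_k\oplus S_k(\Gamma(1))$ for even $k\ge4$, and every $\theta_M$ has constant term $1$, the form
\[
F-\Big(\sum_M \tfrac{1}{|\Orth(M)|}\Big)E_{n/2}
\]
is a cusp form $f$. It remains to show $f=0$. Equivalently, the Fourier coefficients of $F$ (the weighted averages of $r_M(m)$) equal those of the Eisenstein multiple.

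Second step, the main obstacle: show that the averaged representation numbers equal the Eisenstein coefficients. I would do this through Siegel's mass formula with local densities. The weighted average of $r_M(m)$ over the genus equals the mass of the genus times a product of local densities $\prod_{p\le\infty}\alpha_p(L,m)$. For a unimodular $L$, each $\alpha_p(L,m)$ is an explicit local Gauss-sum computation, and the product reproduces $-\frac{2k}{B_k}\sigma_{k-1}(m)$ with $k=n/2$. Proving this "average = product of densities" statement is the genuinely hard input.

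An alternative for the second step, which I would try if the density route is too heavy, is an analytic argument. One shows that $F$ is orthogonal to cusp forms by unfolding the Petersson product $\langle \theta_M, g\rangle$ for a Hecke eigenform $g$. The better formulation is adelic: $F$ is the integral of the theta kernel over $\Orth(V)(\QQ)\backslash \Orth(V)(\A_f)$. Hecke operators $T(p)$ on the modular side correspond via theta to $p$-neighbour operators on the genus side. Averaging over the whole genus makes $F$ an eigenvector of each $T(p)$ with eigenvalue equal to that of $E_{n/2}$, namely $1+p^{n/2-1}$. The eigenvalue computation is a count of $p$-neighbours. Since cuspidal eigenforms have Hecke eigenvalues bounded by Deligne's estimate, which is strictly smaller than $1+p^{n/2-1}$, the cuspidal component $f$ must vanish. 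I expect the delicate point to be the exact identity $T(p)F=(1+p^{k-1})F$: checking that the neighbour counting carries the weights $1/|\Orth(M)|$ correctly.
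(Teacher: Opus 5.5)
Your fallback route is the paper's proof. The form $h=F-\big(\sum_M|\Orth(M)|^{-1}\big)E_{n/2}$ is cuspidal, the genus average satisfies $T_pF=(1+p^{k-1})F$, and an eigenvalue bound for cusp forms forces $h=0$. The paper takes the eigenvalue identity from an explicit formula for $T_p\theta_M$ in Koecher--Krieg. The only real difference is the bound. The paper uses the elementary estimate $|\lambda_p|<p^{k/2}+p^{k/2-1}$, obtained by maximizing $v^{k/2}|f(\tau)|$. This already suffices, since $(1+p^{k-1})-(p^{k/2}+p^{k/2-1})=(p^{k/2}-1)(p^{k/2-1}-1)>0$ for $k\ge 4$. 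Invoking Deligne is correct but unnecessary.

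The mechanism you propose for $T_pF=(1+p^{k-1})F$ is wrong, though. In genus $1$ the Kneser $p$-neighbour operator intertwines with $T(p^2)$, not with $T(p)$. For each $x$ with $Q(x)=m$, count the neighbours $L'$ containing it. Vectors of $L'\setminus L$ correspond to $y\in L\setminus pL$ with $Q(y)=p^2m$, and the count gives
\[
\sum_{L'\sim_p L}\theta_{L'}=T(p^2)\theta_L+\frac{p^{2k-2}-p}{p-1}\,\theta_L .
\]
So a neighbour count can only produce the $T(p^2)$-eigenvalue $1+p^{k-1}+p^{2k-2}$ of the average. That version still closes your argument, because this number exceeds the cuspidal bound for $T(p^2)$.

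If you want $T(p)$ itself, use different lattices. For a maximal isotropic $U\subset M/pM$, set $M_U=\{x\in M:\bar x\in U\}$; then $(M_U,Q/p)$ is again even unimodular. Put $c_j=\prod_{i=0}^{k-1-j}(p^i+1)$. Then
\[
T(p)\theta_M=c_1^{-1}\sum_U\theta_{(M_U,Q/p)} .
\]
The relation is symmetric for the weights:
\[
|\Orth(M')|\,\#\{U:(M_U,Q/p)\simeq M'\}=|\Orth(M)|\,\#\{U':(M'_{U'},Q/p)\simeq M\}.
\]
Hence the average has eigenvalue $c_0/c_1=1+p^{k-1}$.

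Your primary route is not a proof. The identity ``weighted average of $r_M(m)$ equals mass times $\prod_{p\le\infty}\alpha_p(L,m)$'' is Siegel's Hauptsatz, and the statement is a special case of it. So that route only relocates the content, leaving just the local density computation. Likewise, ``unfolding $\langle\theta_M,g\rangle$'' does not work. A single $\theta_M$ is not a Poincar\'e-type series, and it is not orthogonal to cusp forms in general. Orthogonality of $F$ to cusp forms is exactly what the Hecke argument delivers.
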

\begin{proof}
Recall from Remark~\ref{rem:unimodsign} that $n\equiv 0$ mod~$8$.
    Let $g_1$ and $g_2$ be respectively the left-hand side and the right-hand side of~\eqref{eq;clsiegwe}, and let $h\coloneq g_1-g_2$.
    We fix the weight~$k=n/2$.
    The theta series $\theta_M$ and the Eisenstein series $E_{k}$ are elements of $ M_{k}(\Gamma(1))$ with the same constant term in their Fourier expansion. Therefore, $h$ is a weight~$k$ cusp form for $\Gamma(1)$.
    
    Let $p$ be a prime and let $T_p$ be the $p$-th Hecke operator on $M_{k}(\Gamma(1))$, see~\cite[Part~I, Section~4.1]{1-2-3} for details. Using the explicit formula of the Fourier coefficients of $E_k$ and the way~$T_p$ acts on the Fourier coefficients of a modular form \cite[Part~I, (42)]{1-2-3}, we deduce that $T_p E_k=(p^{k-1}+1)E_k$.

    In order to prove the theorem we need the following two lemmas.
    \begin{lem}\label{lemma:inprclsiegwe}
       We have that $T_p(g_1)=(p^{k-1}+1)(g_1)$ for any prime $p>2$.
    \end{lem}
    Lemma~\ref{lemma:inprclsiegwe} follows from an explicit formula for the action of $T_p$ on the theta functions~$\theta_M$, see e.g.~\cite[Section~5.3.4]{koecherkrieg}. Note that the individual modular forms $\theta_M$ are not Hecke eigenforms in general!
    
    Lemma~\ref{lemma:inprclsiegwe} implies that
    \begin{equation}\label{eq:tphinpr}
        T_p(h)=(p^{k-1}+1)h.
    \end{equation}
    
    The next lemma can be proved in terms of elementary properties of the Petersson inner product, see e.g.~\cite{Ko} for a proof in much greater generality.
    \begin{lem}\label{lemma:inprclsiegwebis}
        If $f$ is a non-zero weight~$k$ cusp form for~$\Gamma(1)$ such that $T_p f=\lambda_p f$, then $|\lambda_p|<p^{k/2}+p^{k/2-1}$.
    \end{lem}
    Since $k\geq 4$, Lemma~\ref{lemma:inprclsiegwebis} and~\eqref{eq:tphinpr} imply that $h=0$.
\end{proof}

\section{A vector-valued generalization}\label{sec:vvSWclas}
In this section we explain in terms of vector-valued modular forms how theta series and the Siegel--Weil formula generalize to non-unimodular lattices.
For details, see~\cite[Section~9]{werner} and~\cite[Section~3.7]{opitz}.
We still work in the easier case of elliptic (i.e., genus~$1$) modular forms.
For a generalization to the case of (higher genus) Siegel modular forms see~\cite[Section~1.7]{manuel}.

Let $(L,Q)$ be an even lattice of rank $n$ and level $N$.
Similarly to the previous section, we assume for simplicity that $n$ is even.
Let~$\mathrm{sig}(L)=(b^+,b^-)$ denote the signature of~$L$, and let
\[
    \CC[L'/L]=\bigg\{\sum_{\mu\in L'/L}a_\mu e_\mu\mid a_\mu\in \CC\bigg\}
\]
be the group algebra generated by~$L'/L$.
The multiplication in~$\CC[L'/L]$ is defined by the law~$e_\mu\cdot e_\delta=e_{\mu + \delta}$ on the standard basis vectors of~$\CC[L'/L]$.

Endow~$\CC[L'/L]$ with the standard Hermitian inner product $\langle \sum_\mu a_\mu e_\mu,\sum_\delta b_\delta e_\delta\rangle\coloneq \sum_\mu a_\mu \overline{b_\mu}$.
The \emph{Weil representation} $\rho_L$ is the unitary representation of $\Gamma(1)$ on $\CC[L'/L]$ defined in terms of the standard generators of~$\Gamma(1)$ by
\begin{equation}\label{eq:weilreprho}
\begin{split}
    \rho_L\left(\left(\begin{matrix}1&1\\0&1\end{matrix}\right)\right)e_\mu&=e(Q(\mu))e_\mu\\
    \rho_L\left(\left(\begin{matrix}0&-1\\1&0\end{matrix}\right)\right)e_\mu&=\frac{e((b^--b^+)/8)}{\sqrt{|L'/L|}}\sum_{\delta\in L'/L} e(-(\mu,\delta))e_\delta
\end{split}
\end{equation}
for every $\mu\in L'/L$, where $e(z)\coloneq e^ {2\pi i z}$.
The representation~$\rho_L$ factors through $\Gamma(1)/\Gamma(N)\cong\operatorname{SL}_2(\ZZ/N\ZZ)$.

\begin{Def}
    A holomorphic function $f\colon \HH\rightarrow \CC[L'/L]$ is called a \emph{modular form for $\Gamma(1)$ of weight $k$ for~$\rho_L$} if:
    \begin{itemize}
        \item $f(\gamma\tau)=(c\tau+d)^ k\rho_L(\gamma)f(\tau)$  for all $\gamma=\left(\begin{matrix}a&b\\ c&d\end{matrix}\right)\in\Gamma(1)$.
        \item $f$ is holomorphic at the cusp $\infty$. 
    \end{itemize}
    We denote the space of such modular forms by $M_{k,\rho_L}$.
\end{Def}
We remark that if~$L$ is unimodular, then $\rho_L$ is trivial and $M_{k,\rho_L}$ boils down to the space of elliptic modular forms $M_k(\Gamma(1))$.

Every $f\in M_{k,\rho_L}$ has a Fourier expansion at $\infty$ of the form
        \[
            f(\tau)=\sum_{\substack{m\in\ZZ+Q(\mu)\\m\geq 0}}\sum_{\mu\in L'/L}c(m,\mu)q^ m e_\mu.
        \]
As in the scalar-valued case, the request that $f$ is holomorphic at~$\infty$ is equivalent to the fact that $c(m,\mu)$ vanishes whenever~$m<0$.

\begin{Def}
    The \emph{weight~$k$ Eisenstein series} with values in~$\CC[L'/L]$ (and arising from~$e_0$) is
\[
    E_{k,\rho_L}(\tau)=\sum_{\substack{\gamma\in\Gamma_\infty\backslash \Gamma(1) \\ \gamma=\big(\begin{smallmatrix}
        \ast & \ast \\ c & d
    \end{smallmatrix}\big)}}(c\tau+d)^{-k}\rho_L(\gamma)^{-1}e_0.
\]
\end{Def}
Similarly as in the scalar-valued setting, $E_{k,\rho_L}$ converges if $k>2$, defining a modular form in $M_{k,\rho_L}$.
It has a Fourier expansion of the form
    \[
        E_{k,\rho_L}(\tau)=e_0+\sum_{\mu\in L'/L}\sum_{\substack{m\in\ZZ+Q(\mu)\\m>0}}c(m,\mu)q^ m e_\mu
    \]
    with rational Fourier coefficients.
    The latter have been explicitly computed in~\cite{bruinierkuss}.
    We remark that it is possible to construct similar Eisenstein series from any~$e_\mu$ with $\mu\in L'/L$ isotropic.
    Since such Eisenstein series do not play any role in the Siegel--Weil formula, we avoid to introduce them here and instead refer to~\cite[Section~1.2.3]{bruinier-habil} and~\cite{bruinierkuss} for further details.

 The automorphism group $\mathrm{O}(L'/L)$ acts on $M_{k,\rho_L}$ by
    \[
        \sigma\cdot f\coloneq
        \sum_{\mu\in L'/L}f_\mu e_{\sigma\mu}, \qquad f=\sum_\mu f_\mu e_\mu\in M_{k,\rho_L},
    \]
    for~$\sigma\in \mathrm{O}(L'/L)$.
    Therefore, $M_{k,\rho_L}$ decomposes into isotypical subspaces for the irreducible representations of $\mathrm{O}(L'/L)$.
    Let $M_{k,\rho_L}^{\mathrm{sym}}$ be the subspace corresponding to the trivial representation.
    The modular forms in~$M_{k,\rho_L}^{\mathrm{sym}}$ are called \emph{symmetric}.
    The projection to the subspace of symmetric modular forms is given by
    \begin{align*}
        M_{k,\rho_L}&\rightarrow M_{k,\rho_L}^{\mathrm{sym}},\qquad 
        f\mapsto f^{\mathrm{sym}}\coloneq \frac{1}{|\mathrm{O}(L'/L)|}\sum_{\sigma\in \mathrm{O}(L'/L)}\sigma\cdot f.
    \end{align*}
    One can show that $E_{k,\rho_L}\in M_{k,\rho_L}^{\mathrm{sym}}$.

From now on, we assume that~$L$ is \emph{positive definite}, or equivalently that~$b^+=n$ and~$b^-=0$.
 We define the $\CC[L'/L]$-valued analogue of the theta series~\eqref{eq:thetasermu} as
    \[
        \theta_{L'/L}(\tau)=\sum_{\mu\in L'/L}\theta_{\mu+L}(\tau)e_\mu.
    \]
 Theorem~\ref{thm:thetatrfor} then generalizes as follows.
\begin{teor}
  We have that $\theta_{L'/L}\in M_{n/2,\rho_L}$.
\end{teor}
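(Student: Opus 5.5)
Since $\Gamma(1)$ is generated by $T=\left(\begin{smallmatrix}1&1\\0&1\end{smallmatrix}\right)$ and $S=\left(\begin{smallmatrix}0&-1\\1&0\end{smallmatrix}\right)$, and the factor $(c\tau+d)^k\rho_L(\gamma)$ is a cocycle, I will verify the transformation law $\theta_{L'/L}(\gamma\tau)=(c\tau+d)^{n/2}\rho_L(\gamma)\theta_{L'/L}(\tau)$ only for $\gamma=T$ and $\gamma=S$, comparing with the formulas~\eqref{eq:weilreprho}. Holomorphy on $\HH$ follows from the normal convergence of each $\theta_{\mu+L}$ noted earlier. Holomorphy at $\infty$ is immediate because $Q(x)\geq0$ on $L'$: the expansion $\theta_{\mu+L}=\sum_{m\geq 0} r_{\mu+L}(m)q^m$ has no negative exponents.

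For $T$: if $x\in\mu+L$, then $Q(x)=Q(\mu)+(\mu,\lambda)+Q(\lambda)$ with $\lambda\in L$. Since $(\mu,\lambda)\in\ZZ$ and $Q(\lambda)\in\ZZ$, we get $Q(x)\equiv Q(\mu)\pmod{1}$. Hence $\theta_{\mu+L}(\tau+1)=e(Q(\mu))\theta_{\mu+L}(\tau)$, which is exactly the first line of~\eqref{eq:weilreprho}.

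For $S$, the main step is Poisson summation. Take the Schwartz function $f_\tau(x)=e(Q(x)\tau)$ on $L_\infty\simeq\RR^n$ with the bilinear form $(\cdot,\cdot)$. Its Fourier transform with respect to $(\cdot,\cdot)$ is
\[
\hat f_\tau(y)=\mathrm{vol}(L_\infty/L)^{-1}\big(\tfrac{\tau}{i}\big)^{-n/2}e\big(Q(y)(-1/\tau)\big),
\]
where the Gaussian integral is computed first for $\tau=it$ and then extended by analytic continuation. The normalization $\mathrm{vol}(L_\infty/L)=\sqrt{|L'/L|}$ holds for the measure self-dual with respect to $(\cdot,\cdot)$. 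Apply Poisson summation to the translated lattice $\mu+L$, whose dual is $L'$ carrying the character $e((\mu,y))$:
\[
\sum_{x\in\mu+L}f(x)=\frac{1}{\sqrt{|L'/L|}}\sum_{y\in L'}e((\mu,y))\hat f(y).
\]
Then split $L'$ into cosets $\delta+L$. This gives
\[
\theta_{\mu+L}(-1/\tau)=\frac{(\tau/i)^{n/2}}{\sqrt{|L'/L|}}\sum_{\delta}e(\pm(\mu,\delta))\theta_{\delta+L}(\tau),
\]
after replacing $\tau$ by $-1/\tau$. Since $n$ is even, $(\tau/i)^{n/2}=\tau^{n/2}e(-n/8)$, which matches $e((b^--b^+)/8)$ with $b^-=0$.

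The hardest part is purely bookkeeping: getting the sign in the character $e(\mp(\mu,\delta))$ right, and checking that the matrix obtained acts on the component vector consistently with $\rho_L(S)e_\mu$. If the sign comes out opposite, I will use the substitution $\delta\mapsto-\delta$, under which $\theta_{-\delta+L}=\theta_{\delta+L}$ because $x\mapsto -x$ preserves $Q$. This symmetry absorbs the discrepancy, so the conventions cannot actually obstruct the argument. For the same reason, transposing the matrix of $\rho_L(S)$ when passing from basis vectors to components is harmless, since that matrix is symmetric.
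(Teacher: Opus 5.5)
The paper states this theorem without proof and refers to the literature. Your argument is the standard one, and it is correct:
\begin{itemize}
\item reduce to the generators $T$ and $S$ via the cocycle property, which is legitimate because $\rho_L$ is an honest representation of $\SL_2(\ZZ)$ for $n$ even, and $-1=S^2$ needs no separate check;
\item handle $T$ using $Q(\mu+\lambda)\equiv Q(\mu)\pmod 1$;
\item handle $S$ by Poisson summation over $\mu+L$ with dual lattice $L'$, followed by splitting $L'$ into cosets.
\end{itemize}

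One bookkeeping slip needs fixing: the covolume factor $\mathrm{vol}(L_\infty/L)^{-1}=|L'/L|^{-1/2}$ must enter exactly once.
\begin{itemize}
\item With respect to the self-dual measure for $(\cdot,\cdot)$, the transform is $\hat f_\tau(y)=(\tau/i)^{-n/2}e(Q(y)(-1/\tau))$, with no extra constant. You can see this in an orthonormal basis, where $f_{it}(x)=e^{-\pi t|x|^2}$. The factor $|L'/L|^{-1/2}$ then comes only from the Poisson summation formula.
\item Alternatively, compute $\hat f$ with the measure giving $L$ covolume $1$. Then your formula for $\hat f$ is right, but Poisson summation carries no prefactor.
\end{itemize}
As written, your two displays combine to give $|L'/L|^{-1}$. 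Your final formula, with $|L'/L|^{-1/2}$, is nevertheless the correct one.

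When you carry out the computation, the character comes out as $e(+(\mu,\delta))$. So the substitution $\delta\mapsto-\delta$, using $\theta_{-\delta+L}=\theta_{\delta+L}$, is genuinely needed to match the $e(-(\mu,\delta))$ in $\rho_L(S)$, exactly as you anticipated.
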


In contrast with $E_{k,\rho_L}$, the theta series $\theta_{L'/L}$ is in general not symmetric.
We are now ready to state the Siegel--Weil formula for these $\CC[L'/L]$-valued theta functions. 

\begin{teor}[Siegel--Weil formula]\label{thm_SWfvvp}
Let $L$ be an even positive definite lattice of even rank $n>4$.
    Then 
    \begin{equation}\label{eq;SWvvposdef}
        \sum_{M\in\operatorname{gen}(L)/\sim}\frac{1}{|\mathrm{O}(M)|}\theta_{M'/M}^{\mathrm{sym}}(\tau)=\bigg(\sum_{M\in\operatorname{gen}(L)/\sim}\frac{1}{|\mathrm{O}(M)|}\bigg)E_{n/2,\rho_L}.
    \end{equation}
\end{teor}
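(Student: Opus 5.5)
The plan is to imitate the proof of Theorem~\ref{thm:SWf-scalvalc}, working now in the space $M^{\mathrm{sym}}_{k,\rho_L}$ with $k=n/2>2$. Denote by $g_1$ and $g_2$ the left- and right-hand sides of~\eqref{eq;SWvvposdef}, and set $h=g_1-g_2$.

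First we check that $h$ is a cusp form. For each $M\in\operatorname{gen}(L)$ we fix an isomorphism $M'/M\simeq L'/L$ of finite quadratic modules, so that $\theta_{M'/M}$ becomes an element of $M_{k,\rho_L}$. This identification is only canonical up to $\mathrm{O}(L'/L)$, which is exactly why the symmetrization $\theta^{\mathrm{sym}}_{M'/M}$ is well defined. Since $E_{k,\rho_L}$ is symmetric, we get $h\in M^{\mathrm{sym}}_{k,\rho_L}$. The constant term of $\theta_{M'/M}$ is $e_0$, since $x=0$ is the only vector with $Q(x)=0$ in a positive definite lattice, and the symmetrization fixes $e_0$. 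The constant term of $E_{k,\rho_L}$ is also $e_0$. Hence the $q^0e_\mu$ coefficients of $h$ vanish for every $\mu$. Because all exponents in the Fourier expansion are $\geq 0$, it follows that $h$ is a cusp form.

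Second, we introduce Hecke operators $T_{p^2}$ (or $T_p$) on $M_{k,\rho_L}$ for primes $p\nmid N$, in the vector-valued setting of Bruinier--Stein/Werner. These operators commute with the $\mathrm{O}(L'/L)$-action and therefore preserve symmetric forms. We then aim for two eigen-statements.
\begin{enumerate}
\item $E_{k,\rho_L}$ is an eigenform with eigenvalue $\lambda_p$ of Eisenstein size, e.g.\ $1+p^{k-1}$ for $T_p$, or $(1+p^{k-1})(1+p^{k-2})$-type for $T_{p^2}$ when $n$ is odd. This can be read off from the explicit coefficients in~\cite{bruinierkuss}.
\item The genus average $\sum_M |\mathrm{O}(M)|^{-1}\theta_{M'/M}$ has the same eigenvalue.
\end{enumerate}
The second statement is the analogue of Lemma~\ref{lemma:inprclsiegwe}. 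It comes from the Eichler commutation relation: $T_p\theta_{M'/M}$ equals the sum of $\theta_{K'/K}$ over lattices $K$ that are $p$-neighbours of $M$, all of which lie in $\operatorname{gen}(L)$. When this is summed with weights $1/|\mathrm{O}(M)|$, a double-counting argument shows that each class in the genus appears with the same total multiplicity, namely the number of $p$-neighbours. That number equals the number of isotropic lines in $M/pM$, which is $\lambda_p$.

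Third, we use a Ramanujan-type (Deligne or Hecke trivial) bound: for cuspidal eigenforms in $M_{k,\rho_L}$, the eigenvalues satisfy a bound strictly smaller than $\lambda_p$. This is the analogue of Lemma~\ref{lemma:inprclsiegwebis}, and it follows from the Petersson inner product and unitarity of $\rho_L$. The Hecke operators for $p\nmid N$ are normal and commute, so the cusp space has a basis of simultaneous eigenforms. Since $h$ is an eigenform with eigenvalue $\lambda_p$ and lies in the cusp space, it must vanish.

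The main obstacle is the second eigen-statement, the Eichler commutation relation in the vector-valued setting. Here one must track how the neighbour lattice's discriminant form is identified with $L'/L$; the symmetrization is precisely what absorbs this ambiguity. As a fallback, I would instead deduce the result from the adelic Siegel--Weil formula of the next section, specialized to genus~$1$. That route needs $n>4$ for absolute convergence, which is the range covered by the hypothesis.
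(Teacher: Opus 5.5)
\textbf{There is a gap in the justification of the key eigenvalue lemma.} Your overall route is the paper's. You show that $h$ is cuspidal and use Hecke operators for which $E_{k,\rho_L}$ has eigenvalue $p^{k-1}+1$. You then prove the analogue of Lemma~\ref{lemma:inprclsiegwe} for the genus average and conclude with the analogue of Lemma~\ref{lemma:inprclsiegwebis}; the latter works since $1+p^{k-1}-(p^{k/2}+p^{k/2-1})=(p^{k/2}-1)(p^{k/2-1}-1)>0$ for $k>2$. The problem is your mechanism for the analogue of Lemma~\ref{lemma:inprclsiegwe}. In even rank $n=2k\ge 6$, $T_p$ is \emph{not} the ordinary $p$-neighbour operator. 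For an odd prime $p\equiv 1\pmod N$, the space $M/pM$ is a split quadratic space of dimension $2k$ over $\mathbb{F}_p$, and its number of isotropic lines is
\[
\frac{(p^k-1)(p^{k-1}+1)}{p-1}=1+p+\dots+p^{2k-2}+p^{k-1}\neq 1+p^{k-1}.
\]
On the other hand, the $q^0e_0$-coefficient of $T_p\theta_{M'/M}$ is $1+p^{k-1}$. So the identity $T_p\theta_{M'/M}=\sum_{K\in\mathrm{Nb}_p(M)}\theta_{K'/K}$ already fails in the constant term, and the claim ``number of neighbours $=\lambda_p$'' is false. Ordinary $p$-neighbours correspond to $T_{p^2}$ plus a multiple of the identity, not to $T_p$.

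\textbf{How to fix it.} The relation that does hold for $T_p$ uses a different family of lattices. Take sublattices $pM\subset M_0\subset M$ with $M_0/pM$ a maximal totally isotropic subspace of $M/pM$, equipped with the rescaled form $Q/p$.
\begin{itemize}
\item \emph{Counting.} Fix $x$ with $Q(x)\equiv 0 \pmod p$. If $x\in pM$, it lies in all $\prod_{i=0}^{k-1}(p^i+1)$ such $M_0$; otherwise it lies in exactly $c:=\prod_{i=0}^{k-2}(p^i+1)$ of them. This gives $\sum_{M_0}\theta_{M_0^{(1/p)}}=c\,T_p\theta_M$, and the same holds componentwise for the vector-valued theta functions.
\item \emph{Why $p\equiv 1\pmod N$ is needed, not just $p\nmid N$.} It guarantees $\chi_\Delta(p)=1$. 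It also makes the inclusion $pM_0'\subset M'$ induce an isometry $pM_0'/M_0\cong M'/M$ of discriminant forms, so $M_0^{(1/p)}\in\operatorname{gen}(L)$ by Nikulin's criterion. If instead $\chi_\Delta(p)=-1$, the constant term of $T_p\theta_M$ would be $1-p^{k-1}<0$, which no positive combination of genus theta series can have.
\item \emph{Double counting.} The relation is symmetric: $M$ is recovered from $M_0^{(1/p)}$ via its sublattice $pM$. So your Brandt-matrix double counting now gives the eigenvalue $\prod_{i=0}^{k-1}(p^i+1)/c=p^{k-1}+1$ for the weighted genus average, matching $E_{k,\rho_L}$.
\end{itemize}
Alternatively, you can keep ordinary neighbours. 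Then the Eisenstein eigenvalue and the cusp bound must both be redone for $T_{p^2}$ plus the appropriate scalar. With either correction your argument goes through. Your fallback via Theorem~\ref{thm;SiegelweilKR} is also valid; the paper carries it out in Section~\ref{ex:explSWadcl}.
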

Note that for any~$M\in\operatorname{gen}(L)$, we have an isomorphism $M'/M\cong L'/L$ of finite quadratic modules.
In particular~$M_{k,\rho_M}\cong M_{k,\rho_L}$ for all weights~$k$.
Although there may be several choices of the isomorphism $M_{k,\rho_M}\cong M_{k,\rho_L}$, the induced isomorphism $M_{k,\rho_M}^{\mathrm{sym}}\cong M_{k,\rho_L}^{\mathrm{sym}}$ is independent of such choices.
This implies that we may consider the left-hand side of~\eqref{eq;SWvvposdef} as a combination of modular forms in $M_{n/2,\rho_L}^{\mathrm{sym}}$.

\begin{proof}[Proof of Theorem~\ref{thm_SWfvvp}]
    The proof is analogous to the one of Theorem~\ref{thm:SWf-scalvalc} in the scalar-valued case, but with one adaptation.
    Namely, we need  a Hecke theory on $M_{k,\rho_L}$. This has been developed in~\cite{bruinierstein}.
    To construct the Hecke operators $T_p$, one should extend $\rho_L$ to the monoid $\operatorname{Mat}_2^ +(\ZZ)$ of $2\times 2$ matrices with entries in $\ZZ$ and of positive determinant,
    or at least to some $\Gamma(1)$ double cosets in $\mathrm{Mat}^+_2(\ZZ)$.
    
    For the current proof, it suffices to consider primes $p\equiv 1$ mod~$N$, for which the construction of~$T_p$ is quite easy.
    In fact, in this case $\Gamma(1)\left(\begin{smallmatrix}p &0\\0&1\end{smallmatrix}\right)\Gamma(1)\equiv\Gamma(1)$ mod~$N$ and we may simply define
    \begin{align*}
        T_p(f)
        &=p^{k-1}\sum_{\gamma\in\Gamma(1)\backslash\Gamma(1)\left(\begin{smallmatrix}p&0\\0&1\end{smallmatrix}\right)\Gamma(1)} (c\tau+d)^{-k}\rho_L(\overline{\gamma})^{-1}f(\gamma\tau),
    \end{align*}
    where $\bar\gamma$ denotes the mod~$N$ reduction to~$\SL_2(\ZZ/N\ZZ)$ of~$\gamma$.
    Here we use the fact that~$\rho_L$ factors through $\operatorname{SL}_2(\ZZ/N\ZZ)$.
    
    By studying how~$T_p$ acts on the Fourier coefficients of modular forms for~$\rho_L$, one can check that $T_p(E_{k,\rho_L})=(p^{k-1}+1)E_{k,\rho_L}$ for such $p$.
    Lemmas~\ref{lemma:inprclsiegwe} and~\ref{lemma:inprclsiegwebis} hold also in this generality and one can conclude the proof as for Theorem~\ref{thm:SWf-scalvalc}.
\end{proof}

    The symmetrization operator $M_{k,\rho_L}\rightarrow M_{k,\rho_L}^{\mathrm{sym}}$ preserves the component $f_0$ of $f=\sum_\mu f_\mu e_\mu\in M_{k,\rho_L}$, in the sense that $(f^{\mathrm{sym}})_0=f_0$.
    Since~$(\theta_{L'/L})_0=\theta_L$, the equality of the $0$-th components of both sides of~\eqref{eq;SWvvposdef} and Theorem~\ref{thm:thetatrfor} imply the following result.

    \begin{Cor}
        Let $L$ be an even positive definite lattice of even rank $n>4$.
        Then
        \[
        \sum_{M\in\operatorname{gen}(L)/\sim}\frac{1}{|\mathrm{O}(M)|}\theta_M(\tau)=\bigg(\sum_{M\in\operatorname{gen}(L)/\sim}\frac{1}{|\mathrm{O}(M)|}\bigg)\left(E_{n/2,\rho_L}\right)_0\in M_{n/2}(\Gamma_0(N),\chi_\Delta).
    \]
    \end{Cor}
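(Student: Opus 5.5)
The plan is to take the $e_0$-components of both sides of the vector-valued identity in Theorem~\ref{thm_SWfvvp} and then identify the resulting scalar functions. First I need to make sense of the $0$-th component of $\theta_{M'/M}^{\mathrm{sym}}$ for each $M\in\operatorname{gen}(L)$. I fix an isomorphism of finite quadratic modules $\phi_M\colon M'/M\to L'/L$ and transport $\theta_{M'/M}$ to $\sum_{\mu}\theta_{\mu+M}e_{\phi_M(\mu)}\in M_{n/2,\rho_L}$. Any isomorphism sends $0$ to $0$, so the $e_0$-component of the transported form is $\theta_{0+M}=\theta_M$, whatever $\phi_M$ is. Next I use the remark that symmetrization preserves the $0$-th component. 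Indeed,
\[
(f^{\mathrm{sym}})_0=\frac{1}{|\mathrm{O}(L'/L)|}\sum_{\sigma}f_{\sigma^{-1}0}=f_0,
\]
since every $\sigma\in\mathrm{O}(L'/L)$ fixes $0$. Hence $(\theta_{M'/M}^{\mathrm{sym}})_0=\theta_M$.

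Comparing $e_0$-components in~\eqref{eq;SWvvposdef} then gives the displayed identity
\[
\sum_{M}\frac{1}{|\mathrm{O}(M)|}\theta_M=\Bigl(\sum_M\frac{1}{|\mathrm{O}(M)|}\Bigr)(E_{n/2,\rho_L})_0 .
\]
This uses linearity of taking a component and the fact that the genus contains finitely many classes, so all sums are finite.

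It remains to show the membership in $M_{n/2}(\Gamma_0(N),\chi_\Delta)$. The cleanest route avoids analysing $(E_{n/2,\rho_L})_0$ directly. By Theorem~\ref{thm:thetatrfor}, applied to each $M\in\operatorname{gen}(L)$, we have $\theta_M\in M_{n/2}(\Gamma_0(N_M),\chi_{\Delta_M})$. Lattices in the same genus have isomorphic discriminant forms, so they share the level $N$ and the discriminant $\Delta$. Thus the left-hand side lies in $M_{n/2}(\Gamma_0(N),\chi_\Delta)$. The total mass $\sum_M 1/|\mathrm{O}(M)|$ is a positive rational number, so we may divide by it, and therefore $(E_{n/2,\rho_L})_0$ lies in the same space as well.

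The only point needing care is that the left side of~\eqref{eq;SWvvposdef} is well defined independently of the choices of $\phi_M$. The paper already notes this for the symmetrized forms, and in any case the $0$-th component is visibly choice-independent. So there is no real obstacle: the corollary is a formal consequence of Theorem~\ref{thm_SWfvvp} together with Theorem~\ref{thm:thetatrfor}.
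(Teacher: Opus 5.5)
Your proposal is correct and is the same argument as the paper's. You take the $e_0$-components of the vector-valued identity in Theorem~\ref{thm_SWfvvp}, use that symmetrization preserves the $0$-th component and that $(\theta_{M'/M})_0=\theta_M$, and obtain membership in $M_{n/2}(\Gamma_0(N),\chi_\Delta)$ from Theorem~\ref{thm:thetatrfor}. Your extra remarks, on the independence of the choice of $\phi_M$ and on $N$ and $\Delta$ being invariants of the genus, only spell out what the paper leaves implicit.
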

\section{Adelic version of the Siegel--Weil formula}\label{sec:adverSWF}
The results of the previous section can be generalized to higher genus Siegel theta functions, and to indefinite lattices.
At this level of generality, it is more convenient to describe the Siegel--Weil formula working over the adeles.
The main references are~\cite{weil},~\cite{kudlarallis} and~\cite{kudlarallisII}.

\subsection{The Weil representation}
Let $(V,Q)$ be a quadratic space over $\QQ$, with $Q$ a non-degenerate quadratic form.
In this section, we allow that~$Q$ may be indefinite of signature $(b^+,b^-)$.
For simplicity, we continue to assume that $\ell\coloneq\dim_\QQ V=b^++b^-$ is \emph{even}.
We denote by $H=\Orth(V)$ the group of isometries of~$(V,Q)$.

Let~$\A$ and~$\A_f$ be respectively the ring of adeles and of finite adeles over~$\QQ$, and let~$\hat{\ZZ}=\prod_p \ZZ_p\subset\A_f$.
We denote by~$\lvert\cdot\rvert_\A$ the adelic norm on~$\A$. 
We define the quadratic character~$\chi_V$ attached to $V$ as
\begin{align*}
    \chi_V\colon \A^\times/\QQ^\times&\rightarrow\{\pm1\},\qquad
    x\mapsto (x,(-1)^{\ell/2}\det(V))_\QQ,
\end{align*}
where~$(\cdot{,}\cdot)_\QQ$ denotes the global Hilbert symbol, see~\cite[Chapter~III]{serre} and~\cite{adeles} for further details.

The standard rational symplectic space of dimension $2r$ is the space $W=\QQ^{2r}$, where we consider any~$w\in W$ as a row vector~$w=(w_1,w_2)$ of column vectors $w_1,w_2\in\QQ^r$, endowed with the standard symplectic form $\left<(x_1,x_2),(y_1,y_2)\right>\coloneq {}^tx_1y_2-{}^tx_2y_1$. 
The \emph{symplectic group} $G=\operatorname{Sp}(W)$ \emph{of genus $r$} is the group of isomorphisms preserving $\langle\cdot{,}\cdot\rangle$.
Note that $G\cong\operatorname{Sp}_r$, where
\[
    \operatorname{Sp}_r(\QQ)=\left\{g\in\operatorname{GL}_{2r}(\QQ)\mid {}^tg J g=J\right\},\qquad J\coloneq\left(\begin{matrix}0&-1_r\\ 1_r&0\end{matrix}\right).
\]
We may consider~$\Sp_r$ as a generalization of~$\SL_2$, in the sense that $\Sp_1=\SL_2$.

The \emph{Siegel parabolic subgroup} $P=\left\{\left(\begin{smallmatrix}*&*\\ 0_r&*\end{smallmatrix}\right)\right\}\subset\operatorname{Sp}_r$ has a Levi decomposition $P=MN$, with Levi factor
\[
    M=\left\{m(a)=\left(\begin{matrix}a&0\\0&{}^ ta^{-1}\end{matrix}\right)\,\mid\, a\in\operatorname{GL}_r\right\}
\]
and unipotent radical subgroup
\[
    N=\left\{n(b)=\left(\begin{matrix}1&b\\0&1\end{matrix}\right)\,\mid\, b\in\operatorname{Sym}_r\right\}.
\]

To construct theta functions in this setting, we need to introduce (the Schrödinger model of) the Weil representation attached to the pair $(G,H)$.
The latter is a \emph{dual reductive pair}, in the sense that the groups~$G$ and~$H$ can be realized as subgroups of a larger symplectic group in which they are the centralizers of one another.
The standard way to construct such a larger symplectic group is to define it as the group of isometries of the space~$\WW=V\otimes_\QQ W$ endowed with the symplectic form 
\[
    \big\langle \big\langle v\otimes w,v'\otimes w'\big\rangle\big\rangle \coloneq (v,v')\cdot\langle w,w'\rangle.
\]

Let $\psi\colon \A/\QQ\rightarrow\CC^\times$ be the standard additive character.
This is constructed so that at the infinite place it is $\psi_\infty(x)=e(-x)$.
We now explain how to construct the Weil representation $\omega=\omega_\psi$ associated with~$(G,H)$ and $\psi$, referring to~\cite{weil} and~\cite{kudla-survey} for further details.
It is a representation of $G\times H$ on the space of Schwartz--Bruhat functions~$ S(V(\A)^r)$ on $V(\A)^r$.
Recall that
\begin{equation}\label{eq:splitschwsp}
    \begin{split}
         S(V(\A)^r)
    &=S(V(\A_f)^r)\otimes S(V(\RR)^r),
    \end{split}
\end{equation}
where~$S(V(\A_f)^r)$ is the space of locally constant functions of compact support on~$V(\A_f)^r$ and $S(V(\RR)^r)$ is the classical space of Schwartz functions on~$V(\RR)^r$.

The representation~$\omega$ is defined with respect to the standard generators of $G(\A)$ and~$H(\A)$ as
\begin{align*}
    (\omega(h)\varphi)(x)&=\varphi(h^{-1}x), \quad h\in H(\A),\\
    (\omega(m(a))\varphi)(x)&=\chi_V(\det(a))|\det(a)|^{\ell/2}_\A \varphi(xa),\quad a\in\mathrm{GL}_r(\mathbb{A}),\\
    (\omega(n(b))\varphi)(x)&=\psi(\operatorname{tr}(bQ(x)))\varphi(x), \quad b\in\operatorname{Sym}_r(\A),\\
    (\omega(S)\varphi)(x)&=\gamma(V)^r\int_{V(\A)^r}\varphi(y)\psi(-(x,y))d_\psi y,
    \end{align*}
    where $\gamma(V)$ is the \emph{Weil index} (a $8$-th root of unity), $d_\psi y$ is the self-dual Haar measure on~$V(\A)^r$ with respect to $\psi$, and $S\coloneq\big(\begin{smallmatrix}0&-1_r\\1_r&0\end{smallmatrix}\big)\in\operatorname{Sp}_r(\A)$.

Under the decomposition~\eqref{eq:splitschwsp} of~$S(V(\A)^r)$, the Weil representation splits into local representations as~$\omega=\omega_f\otimes\omega_\infty$.
Here~$\omega_f$ and~$\omega_\infty$ are the Weil representations of~$G(\A_f)\times H(\A_f)$ and~$G(\RR)\times H(\RR)$ on respectively~$S(V(\A_f)^r)$ and~$S(V(\RR)^r)$.

\begin{Rem}\label{rem:weiltoweil}
    Let~$r=1$.
    The Weil representation~$\rho_L$ of~$\SL_2(\ZZ)$ on~$\CC[L'/L]$ defined in~\eqref{eq:weilreprho} can be considered as a restriction of~$\omega$, as follows.
    Denote by~$S_L$ the subspace of~$S(V(\A_f))$ spanned by the characteristic functions~$\varphi_{\mu,f}$ of the lattice cosets~$\mu+\hat L$, for~$\mu\in L'/L$, where~$\hat L=L\otimes_\ZZ\hat \ZZ$.
    This subspace is preserved under the action of~$\SL_2(\hat{\ZZ})$ given by~$\omega_f$. Furthermore, we have an isomorphism of group algebras
    \begin{equation}\label{eq:identgralg}
        \CC[L'/L]\to S_L,\qquad e_\mu\mapsto\varphi_{\mu,f}.
    \end{equation}
    Under the identification~\eqref{eq:identgralg}, for every~$\gamma\in\SL_2(\ZZ)$ one can show that
    \begin{equation}\label{eq_weilvsweil}
    \overline{\rho_L}(\gamma)e_\mu=\omega_f(\gamma)\varphi_{\mu,f}\qquad\text{for all $\mu$,}
    \end{equation}
    where~$\overline{\rho_L}$ is the conjugation of the representation~$\rho_L$.
    On the left-hand side of~\eqref{eq_weilvsweil} we identified~$\gamma$ with its image under~$\SL_2(\ZZ)\hookrightarrow\SL_2(\RR)$, while on the right-hand side $\gamma$ is identified with its image under the natural map~$\SL_2(\ZZ)\hookrightarrow\SL_2(\hat{\ZZ})$.
\end{Rem}

 We now construct a maximal compact subgroup $K'\coloneq K_\infty'\prod_{p< \infty}K_p'$ of $G(\A)$, where the local factors are constructed as follows.\footnote{We denote this subgroup by~$K'$ to avoid confusion, since $K$ will denote a compact subgroup of a general Spin group in Section~\ref{sec:geomSWF}.}
For any prime $p<\infty$, let $K_p'=\operatorname{Sp}_r(\ZZ_p)$.
For $p=\infty$, let
\begin{equation}\label{eq:compmaxofsympl}
    K_\infty'=\left\{k=\left(\begin{matrix}a&b\\-b&a\end{matrix}\right)\in \mathrm{Sp}_r(\RR)\,\mid\; \vect{k}\coloneq a+ib,\,\, \vect{k} {}^t\bar{\vect{k}}=1_r\right\}.
\end{equation}
The group~$K'$ acts on~$S(V(\A)^r)$ under the Weil representation.
We say that~$\varphi\in S(V(\A)^r)$ is \emph{$K'$-finite} if the space generated by all its $K'$-translates is finite dimensional, namely $\dim\langle\omega(k)\varphi \,|\,k\in K'\rangle_\CC<\infty$.


\subsection{Theta functions}
For every $\varphi\in S(V(\A)^r)$ we can construct a theta function on~$G(\A)\times H(\A)$ as
\[
    \theta(g,h,\varphi)=\sum_{x\in V(\QQ)^r}(\omega(g)\varphi)(h^ {-1}x),\qquad g\in G(\A),\, h\in H(\A).
\]
This theta function is left invariant under $H(\QQ)$ by construction, and invariant under $G(\QQ)$ by Poisson summation.
    Furthermore, it has moderate growth.
    
    We define the \emph{average value} of $\theta$ with respect to $H$ as
    \begin{equation}\label{eq:integralaveragegen}
        I(g,\varphi)=\int_{H(\QQ)\backslash H(\A)} \theta(g,h,\varphi)dh,
    \end{equation}
    where $dh$ is the invariant Haar measure on $H(\QQ)\backslash H(\A)$ normalized with $\operatorname{vol}(H(\QQ)\backslash H(\A))=1$.

We denote by~$\mathrm{Witt}(V)$ the \emph{Witt index} of~$V$. This is the dimension of any maximal isotropic subspace of~$V$. Note that~$\mathrm{Witt}(V)\leq\min(b^+,b^-)$, and that~$\mathrm{Witt}(V)=0$ whenever~$V$ is anisotropic.
\begin{prop}[Weil convergence criterion]\label{prop:weilconvcrit}
    The integral $I(g,\varphi)$ converges for all $\varphi$ if either
    \begin{enumerate}
        \item[i)] $V$ is anisotropic over $\QQ$.
        \item[ii)] $r<\ell-\mathrm{Witt}(V)-1$.
    \end{enumerate}
    If furthermore $\varphi$ is $K'$-finite, then $I(g,\varphi)$ is an automorphic form for $G$.
\end{prop}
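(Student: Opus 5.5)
The plan follows Weil's strategy: replace the theta integral by a sum over $H(\QQ)$-orbits on $V(\QQ)^r$, and reduce absolute convergence to finiteness of Tamagawa-type volumes plus convergence of certain orbital integrals. Since $\omega(g)\varphi$ is again a Schwartz--Bruhat function, we may take $g=1$. We may also replace $\varphi$ by a dominating function of the form $\varphi_f\otimes\varphi_\infty$, with $\varphi_f$ the characteristic function of $\hat L^r$ for some lattice $L$ and $\varphi_\infty$ a Gaussian-type majorant. The task is then to show
\[
\int_{H(\QQ)\backslash H(\A)}\sum_{x\in V(\QQ)^r}|\varphi(h^{-1}x)|\,dh<\infty .
\]

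\emph{Case i): $V$ anisotropic.} By the Borel--Harish-Chandra/Godement compactness criterion, $H(\QQ)\backslash H(\A)$ is compact, because $H$ has no nontrivial rational unipotent elements. The function $h\mapsto\sum_x|\varphi(h^{-1}x)|$ is continuous: the sum converges locally uniformly in $h$, since $h^{-1}$ ranges over a compact set and the lattice sum of a Schwartz function converges uniformly under bounded distortion. A continuous function on a compact space of finite volume is integrable, so this case is routine.

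\emph{Case ii): Witt index positive.} Here I would use reduction theory for $H$. Take a Siegel set $\mathfrak S=\omega A_t K_H$ attached to a minimal rational parabolic $P_H$ of $H$ stabilizing an isotropic flag of dimension $w=\mathrm{Witt}(V)$; finitely many $H(\QQ)$-translates of $\mathfrak S$ cover $H(\A)$. On $A_t$, with coordinates $a=(a_1,\dots,a_w)$, the modular character is explicit: $\delta(a)=\prod_i a_i^{\ell-2i}$ in suitable normalization. The integral is then bounded by
\[
\int_{A_t}\delta(a)^{-1}\sum_{x\in V(\QQ)^r}\sup_{k,n}|\varphi(k^{-1}a^{-1}n^{-1}x)|\,da .
\]
Next, split the lattice sum according to the position of the $r$-tuple $x$ relative to the isotropic flag. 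Writing $x$ in coordinates adapted to $V=U\oplus V_0\oplus U'$, the action of $a^{-1}$ scales the components in $U'$ by $a_i^{-1}$ and those in $U$ by $a_i$. The components scaled by $a_i$ contribute a factor $\approx\prod_i \max(1,a_i)^{-r}$ from the lattice-point count (roughly, a count of $r$ columns of $\varphi$-mass). The components scaled by $a_i^{-1}$ contribute a factor bounded by $\prod_i \max(1,a_i^{-1})^{\,\cdots}$ times a rapidly decaying term unless those coordinates vanish.

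\emph{Main obstacle.} The hardest step is carrying out this lattice-point estimate carefully and showing that the resulting integral $\int_{A_t}\prod_i a_i^{-(\ell-2i)}a_i^{\,r+\ldots}\,da$ converges exactly when $r<\ell-w-1$. I would first treat the rank-one case $w=1$, where $A_t=\{a\ge t\}$. The dominant terms are those $x$ whose columns lie in $a$-expanded directions, giving growth $a^{r}$ against $\delta(a)^{-1}=a^{-(\ell-2)}$. With $da=a^{-1}d^\times a$ conventions, convergence requires $r<\ell-2$, which matches $r<\ell-w-1$ when $w=1$. The general $w$ follows by iterating over the simple roots: the worst exponent occurs at the last root, and careful bookkeeping gives the stated bound.

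\emph{Automorphy.} Finally, if $\varphi$ is $K'$-finite, then $I(g,\varphi)$ is left $G(\QQ)$-invariant, because $\theta$ is (by Poisson summation) and we may interchange with the absolutely convergent integral. It is $K'$-finite because $\omega$ commutes with the integration over $H$. It is $\mathcal Z$-finite because $\omega_\infty$ of the center of the enveloping algebra of $G$ acts through $O(V)$-invariant operators, which act by scalars on the $K'_\infty$-types occurring. Moderate growth is inherited from the majorant estimate above, which is uniform in $g$ on Siegel sets of $G$. Hence $I(g,\varphi)$ is an automorphic form.
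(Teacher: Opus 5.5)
The paper quotes this criterion from Weil without proof, so there is no argument in the paper to compare with. Your route is the classical one: Siegel sets for $H$, followed by a lattice-point count for the theta majorant. Case i) is fine. There are two problems in what remains.

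\textbf{The estimate in case ii) is not carried out, and your description of it is inverted.} The step you postpone as the ``main obstacle'' is the entire content of the proposition. In the paragraph before it, the two behaviours are swapped. The coordinates that $a^{-1}$ expands contribute $O(1)$, essentially only the value $0$, and not $\max(1,a_i)^{-r}$. The coordinates that $a^{-1}$ contracts contribute $\asymp a_i$ each per column, and not a rapidly decaying factor. This also contradicts your own $w=1$ heuristic, which is correct.

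Here is how to close it. Let $w=\mathrm{Witt}(V)$ and $V=U\oplus V_0\oplus U'$, with $a$ acting by $a_i$ on $u_i\in U$ and by $a_i^{-1}$ on the dual vector $u_i'$.
\begin{itemize}
\item The modulus is $\delta(a)=\prod_i a_i^{\ell-2i}$.
\item For $h=nak$ in the Siegel set, $h^{-1}x=k^{-1}(a^{-1}n^{-1}a)\,a^{-1}x$, with $a^{-1}n^{-1}a$ in a compact set.
\item Majorize $\varphi_\infty$ by $C_N\prod_j(1+\|x_j\|^2)^{-N}$. A general Schwartz function need not have a Gaussian majorant, and polynomial decay is all the count uses.
\item The integrand on the Siegel set is then $\ll\prod_i a_i^{\,r}$.
\item Put $b_j=a_j/a_{j+1}$ for $j<w$ and $b_w=a_w$; these are bounded below on $A_t$. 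The exponent of $b_j$ in $\delta(a)^{-1}\prod_i a_i^{\,r}$ is $\sum_{i\le j}(r-\ell+2i)=j(r-\ell+j+1)$.
\item This is negative for every $1\le j\le w$ exactly when $r<\ell-w-1$, which proves ii).
\end{itemize}
Treat the split case $\ell=2w$ separately. There the last simple root is $e_{w-1}+e_w$, so $a_w$ is not bounded below, and the $w$-th hyperbolic plane contributes $\asymp\max(a_w,a_w^{-1})$. Since $a_w$ has exponent $0$ in $\delta$ and $a_w\mapsto a_w^{-1}$ preserves $A_t$, you may assume $a_w\ge 1$, and the same computation applies.

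\textbf{The $\mathcal Z(\mathfrak g)$-finiteness argument is wrong.} It is true that $\omega_\infty(\mathcal Z(\mathfrak g))$ consists of $\Orth(V)$-invariant operators. But they do not act by scalars on $K'_\infty$-types in general. For indefinite $V$ the $K'_\infty$-isotypic subspaces of the Fock model are infinite-dimensional and are not eigenspaces of $\mathcal Z(\mathfrak g)$. What makes $I(\cdot,\varphi)$ $\mathcal Z(\mathfrak g)$-finite is the integration over $H$:
\begin{itemize}
\item By Howe's double-commutant theorem (the first fundamental theorem for $\Sp_r$ acting on the Weyl algebra), $\omega(\mathcal Z(\mathfrak g))\subseteq\omega(\mathcal U(\mathfrak h))$.
\item For $X\in\mathfrak h$ and every Schwartz $\psi$, $I(g,\omega(X)\psi)=0$. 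This is because $\theta(g,h,\omega(X)\psi)=\frac{d}{dt}\theta(g,h\exp(tX),\psi)|_{t=0}$ and $dh$ is right invariant. Differentiation under the integral is justified by the same majorant.
\item Write $\omega(z)=\omega(u)$ with $u\in\mathcal U(\mathfrak h)=\CC\oplus\mathfrak h\,\mathcal U(\mathfrak h)$. Then $I(g,\omega(z)\varphi)=\epsilon(u)\,I(g,\varphi)$, where $\epsilon$ is the augmentation, so $\mathcal Z(\mathfrak g)$ acts by a character.
\end{itemize}
Your remaining points are fine as sketched: $G(\QQ)$-invariance, $K'$-finiteness via $I(gk,\varphi)=I(g,\omega(k)\varphi)$, and moderate growth via $(\omega(m(\alpha))\varphi)(x)=\chi_V(\det\alpha)|\det\alpha|_{\A}^{\ell/2}\varphi(x\alpha)$ on Siegel sets of $G$. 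Finally, drop the opening announcement of an orbit-by-orbit unfolding, since nothing in your argument uses it.
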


The Siegel--Weil formula extends also to this setting: Under the Weil convergence criterion, the average value of $\theta$ is a special value of an adelic Eisenstein series. This is explained in the next section.

\subsection{Siegel Eisenstein series}\label{sec:SiegEisser}
Let $G(\A)=P(\A)K'=N(\A)M(\A)K'$ be the Iwasawa decomposition of $G(\A)$.
We write any $g\in G(\A)$ with respect to this decomposition as~${g=nm(a)k}$, and define 
\[
    \lvert a(g)\rvert\coloneq \lvert\mathrm{det}(a)\rvert_\A.
\]
Note that the function $g\mapsto |a(g)|$ on $G(\A)$ is left $N(\A)M(\QQ)$-invariant and right $K'$-invariant.

For $\varphi\in S(V(\A)^ r)$ and $s\in\CC$, we define
\[
    \phi(g,s)\coloneq\big(\omega(g)\varphi\big)(0)\cdot |a(g)|^{s-s_0},
\]
where
\[
s_0=\ell/2-\rho_r\qquad  \text{and}\qquad \rho_r=(r+1)/2.
\]
The defining formulas of the Weil representation for $N$ and $M$ imply that
\begin{equation}\label{eq:trainrep}
    \phi(nm(a)g,s)=\chi_V(\det(a))|\mathrm{det}(a)|_\A^{s+\rho_r}\phi(g,s).
\end{equation}
We denote by $I_r(s,\chi_V)=\operatorname{Ind}_{P(\A)}^{G(\A)}(\chi_V\cdot|\cdot|_\A^ s)$ the principal series induced representation of~$G(\A)$, consisting of all smooth functions on~$G(\A)$ satisfying the transformation~\eqref{eq:trainrep}.
Therefore, $\phi(s)\in I_r(s,\chi_V)$.

\begin{Rem}\label{rem:intertwining}
    The map $\lambda\colon S(V(\A)^r)\rightarrow I_r(s_0,\chi_V)$, $\varphi\mapsto \phi(s_0)$ is a \emph{$G(\A)$-intertwining map}, i.e., it is~$G(\A)$-equivariant.
    This is not true for other values of $s$, motivating why the value~$s_0$ is crucial in what follows.
    Here~$G(\A)$ acts on $S(V(\A)^r)$ and~$I_r(s_0,\chi_V)$ respectively under the Weil representation and by right-translations.
\end{Rem}
The (adelic) \emph{Siegel Eisenstein series} attached to $\phi(s)\in I_r(s,\chi_V)$ is defined as
\begin{align}
\label{eq:eis}
       E(g,s,\phi)=\sum_{\gamma\in P(\QQ)\backslash G(\QQ)}\phi(\gamma g,s),\qquad g\in G(\A).
\end{align}
It satisfies the following properties:
\begin{itemize}
    \item It converges absolutely for $\operatorname{Re}(s)>\rho_r$ if $\phi$ is \emph{standard}, i.e., its restriction to $K'$ is independent of $s$.
    \item If $\varphi$ is $K'$-finite, $E(g,s,\phi)$ defines an automorphic form on $G(\A)$.
    \item By Langlands~\cite{langlands} it admits a meromorphic continuation in $s$ to the whole complex plane satisfying a functional equation under $s\mapsto -s$.
\end{itemize}

We are now ready to state the adelic version of the Siegel--Weil formula~\cite{kudlarallis}, \cite{kudlarallisII}.
\begin{teor}[Siegel, Weil, Kudla--Rallis]\label{thm;SiegelweilKR}
    Assume that the Weil convergence criterion holds. Let $\varphi\in S(V(\A)^r)$ be $K'$-finite
    .
    Then
    \begin{enumerate}
        \item[i)] $E(g,s,\lambda(\varphi))$ is holomorphic at $s=s_0$,
        \item[ii)] $E(g,s_0,\lambda(\varphi))=\kappa I(g,\varphi)$ where
        \[
            \kappa=\begin{cases}
                1,&\ell>r+1,\\
                2,&\ell\leq r +1.
            \end{cases}
        \]
    \end{enumerate}
\end{teor}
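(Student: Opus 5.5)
The plan is to follow Weil's original strategy: show that both sides, viewed as functions of $\varphi$, are $G(\A)$-intertwining maps out of $S(V(\A)^r)$ into automorphic forms that factor through the same coinvariant quotient. Then compare them by their constant terms along $P$.

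The first step is to reduce to a uniqueness statement. By Remark~\ref{rem:intertwining}, $\varphi\mapsto E(g,s_0,\lambda(\varphi))$ is $G(\A)$-equivariant, once holomorphy at $s_0$ is known. The map $\varphi\mapsto I(g,\varphi)$ is also $G(\A)$-equivariant, since $\theta(g'g,h,\varphi)=\theta(g,h,\omega(g')\varphi)$. Moreover $I$ is $H(\A)$-invariant in $\varphi$ because we integrate over $H(\QQ)\backslash H(\A)$. The key structural input is Rallis' result: the space of $H(\A)$-coinvariants $S(V(\A)^r)_{H(\A)}$ embeds into $I_r(s_0,\chi_V)$ via $\lambda$. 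Locally this means that $\varphi_v\mapsto(\omega(g)\varphi_v)(0)$ separates $H_v$-coinvariants, which one proves by a Bernstein-type distribution argument on $V_v^r$, orbit by orbit on the rank of $x$. Granting this, both $I$ and $E(\cdot,s_0,\cdot)$ factor through $\lambda$. So it suffices to show that $A(g)\coloneq E(g,s_0,\lambda\varphi)-\kappa I(g,\varphi)$ vanishes.

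The second step is to compute constant terms along the Siegel parabolic and its smaller standard parabolics $P_j$. For the Eisenstein series one uses the Bruhat decomposition $P\backslash G/P_j$: the constant term is a sum of intertwining operators $M(w)\phi(s)$. In the convergent range these are Euler products of local intertwining integrals times ratios of zeta/$L$-functions, and they continue to $s_0$. For the theta integral one inserts the theta series, splits $V(\QQ)^r$ according to the rank of the span of the $x_i$, and unfolds over $H(\QQ)$-orbits. Witt's theorem identifies the orbits, and the Weil convergence criterion (Proposition~\ref{prop:weilconvcrit}) justifies the unfolding. The resulting stabilizer integrals give Tamagawa-type volumes. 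Matching the two sides term by term, using the Tamagawa number of $H$ (namely $2$ for $\Orth$) together with that of the relevant stabilizers, yields equality of all constant terms. This matching is where $\kappa$ appears: when $\ell\le r+1$ the intertwining operator $M(w_0)\phi(s)$ at $s_0$ contributes an extra copy equal to the main term, doubling it.

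The third step concludes the argument. The difference $A$ is an automorphic form on $G$ all of whose constant terms along standard parabolics vanish, so $A$ is cuspidal. It also lies in the image of $I_r(s_0,\chi_V)$ under an intertwining map. A cusp form generating a quotient of a degenerate principal series is impossible: pairing $A$ against the cuspidal spectrum and using that $I_r(s_0,\chi_V)$ has no cuspidal constituents (its Jacquet module along $P$ is nonzero) forces $A=0$. Holomorphy (i) comes out of the same computation: the constant term of $I$ is finite, so any pole of $E$ at $s_0$ would have to live in the constant term of the Eisenstein series. The explicit local computation of $M(w)$ on $\lambda(\varphi)$ shows that the potential poles cancel, because $\lambda(\varphi)$ is a standard section coming from the Weil representation.

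I expect the main obstacle to be the second step, specifically the precise analysis of the intertwining operators at $s=s_0$. This covers both proving holomorphy there and the exact local computation that produces the constants $\kappa$ and the Tamagawa volumes. The outer (anisotropic and low-rank) orbits are routine by comparison.
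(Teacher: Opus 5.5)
Steps~1 and~2 have the standard architecture: match constant terms along the maximal parabolics, so that the difference is cuspidal. Step~3, however, has a genuine gap.

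Knowing that $A(\cdot,\varphi)=E(\cdot,s_0,\lambda(\varphi))-\kappa I(\cdot,\varphi)$ is cuspidal, and that $\varphi\mapsto A(\cdot,\varphi)$ factors through $\lambda(S(V(\A)^r))\subset I_r(s_0,\chi_V)$, does not give $A=0$ for the reason you state. A nonzero Jacquet module is a \emph{local} statement: it only says that the local constituents of $I_r(s_0,\chi_V)$ are not supercuspidal. Cuspidality of an automorphic form is a \emph{global} condition, namely the vanishing of its integrals along the unipotent radicals of the standard parabolics over $\QQ\backslash\A$. It is perfectly compatible with every local component having nonzero Jacquet modules. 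For example, the automorphic representation generated by the level one cusp form $\Delta$ is an unramified principal series at every prime. So ``$I_r(s_0,\chi_V)$ has no cuspidal constituents'' cannot be read off from local representation theory, and without further input the pairing of $A$ with the cuspidal spectrum has no reason to vanish.

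The missing ingredient is the first point of the paper's sketch: the theta integral $I(\cdot,\varphi)$ is orthogonal to cusp forms on $G(\A)$.
\begin{itemize}
\item For the Eisenstein series the analogous statement is routine. For $\operatorname{Re}(s)>\rho_r$ you unfold, so the pairing with a cusp form only sees its constant term along $P$, and then you continue meromorphically to $s_0$.
\item For $I$ it is a genuine theorem. The pairing $\langle I(\cdot,\varphi),f\rangle$ is the integral over $H(\QQ)\backslash H(\A)$ of a theta lift of $f$ to $H$. One must show that a cusp form never lifts to a function with nonzero projection onto the constants.
\end{itemize}
Once both sides are orthogonal to cusp forms, the cusp form $A$ is orthogonal to itself, hence $A=0$. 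Equivalently, as in the paper, orthogonality places $I$ in the Eisenstein part of the spectrum, and the constant-term bookkeeping then shows that only the Siegel parabolic contribution survives.

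Your argument for~(i) is also circular as written. Finiteness of the constant terms of $I$ says nothing about poles of $E$ until you know the constant terms agree. Proving that agreement already requires evaluating the constant terms of $E$ along each maximal parabolic at $s_0$. These are built from Siegel Eisenstein series on smaller symplectic groups at shifted points. So you need an induction on $r$ together with explicit control of the local intertwining operators on $\lambda(S(V_v^r))$. The claim that ``potential poles cancel'' is where this work lies; it does not follow from $\lambda(\varphi)$ being a standard section.
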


Roughly speaking, the idea of the proof is the following, see~\cite{marina} for details.
    \begin{itemize}
        \item Show that $I(g,\varphi)$ is orthogonal to cusp forms on $G(\A)$. Therefore, it must be a linear combination of Eisenstein series associated with parabolic subgroups of $G$.
        \item There is a lot of cancellation in the different contributions.
        \item Only the Siegel parabolic contribution survives.
    \end{itemize}

\subsection{Back to positive definite lattices}
\label{ex:explSWadcl}
    We briefly explain how Theorem~\ref{thm;SiegelweilKR} specializes to the case of positive definite lattices considered in Section~\ref{sec:vvSWclas}, for details see~\cite[Section~3.1.2]{opitz}.
    
    Let  $(V,Q)$ be positive definite of even dimension~$n>4$. For simplicity, we consider~$r=1$, so that $\operatorname{Sp}_r\cong\operatorname{SL}_2$.
    Assume that the Weil convergence criterion holds.
    Let $L\subset V$ be an even lattice.
    Fix a~$\mu\in L'/L$ and let $\varphi_{\mu,f}\in S(V(\A_f))$ be the characteristic function of the lattice coset~$\mu+\hat{L}$, as in Remark~\ref{rem:weiltoweil}.
    We use it to construct the Schwartz--Bruhat function~$\varphi_\mu=\varphi_{\mu,f}\otimes\varphi_\infty\in S(V(\A))$,
    where at the archimedean place we define~$\varphi_\infty\in S(V(\RR))$ to be the standard Gaussian $\varphi_\infty(x)=e^{-\pi(x,x)}$.
    The latter function is invariant under $H(\RR)$ and has weight $n/2$ under $K_\infty'=\operatorname{SO}_2(\RR)\subset\operatorname{SL}_2(\RR)$, in the sense that
    \begin{equation}\label{eq:weileigr1}
        \omega_\infty(\kappa_\theta)\varphi_\infty=e^{i\theta n/2}\varphi_\infty,\qquad\kappa_\theta=\begin{pmatrix}
        \cos\theta & \sin\theta \\ 
        -\sin\theta & \cos\theta
    \end{pmatrix}\in\mathrm{SO}_2(\RR).
    \end{equation}
    
    For $\tau\in\HH$, $\tau=u+iv$, we denote by~$g_\tau$ the standard element of~$\SL_2(\RR)$ mapping~$i\in\HH$ to~$\tau$, namely
    \begin{equation}\label{eq:gtauforgen1}
        g_\tau=n(u)m\big(v^{1/2}\big)=\begin{pmatrix}1&u\\0&1\end{pmatrix}\begin{pmatrix}v^{1/2}&0\\0&v^{-1/2}\end{pmatrix}.
    \end{equation}
    Recall from \cite[Section~1.1]{kiefer} and \cite[Section~2.5]{zuffetti} that to pass from a weight~$n/2$ automorphic form $f^\#\colon G(\A)\to\CC$ to a classical modular form~$f\colon\HH\to\CC$ of the same weight, one simply defines~$f(\tau)=j(g_\tau,i)^{n/2} f^\#(g_\tau)$.
    Note also that $H(\A_f)$ acts on cosets of lattices in $\operatorname{gen}(L)$ by $h (\mu + L)=V(\QQ)\cap h(\mu + \hat{L})$.
    In our current setting, we may pass from the adelic theta function attached to~$\varphi_\mu$ to a classical theta function over~$\HH$ as
    \begin{align*}
        j(g_\tau,i)^{n/2}\theta(g_\tau,h,\varphi_\mu)
        &=v^{-n/4}\sum_{x\in V(\QQ)}\varphi_{\mu,f}(h_f^{-1}x)(\omega(g_\tau)\varphi_\infty)(h_\infty^{-1}x)\\
        &=v^{-n/4}\sum_{x\in h_f(\mu+L)}(\omega(g_\tau)\varphi_\infty)(x)
        =\sum_{x\in h_f(\mu+L)}e^{2\pi i Q(x)\tau}=\theta_{h_f(\mu+L)}(\tau),
    \end{align*}
    where~$h=h_\infty h_f\in H(\RR)H(\A_f)$.


    We now consider the adelic Eisenstein series.
    The intertwining map and the Weil representation split respectively as~$\lambda=\lambda_f\otimes\lambda_\infty$ and~$\omega=\omega_f\otimes\omega_\infty$ under the product~$S(V(\A))=S(V(\A_f))\otimes S(V(\RR))$.
    Since~$P(\QQ)\backslash G(\QQ)\cong\Gamma_\infty\backslash\SL_2(\ZZ)$, we may rewrite the Eisenstein series as
    \begin{align*}
        j(g_\tau,i)^{n/2} E(g_\tau,s_0,\lambda(\varphi_\mu))&=
        v^{-n/4}\sum_{\gamma\in \Gamma_\infty\backslash\operatorname{SL}_2(\ZZ)}\lambda(\varphi_\mu)(\gamma g_\tau,s_0)\\
        &=v^{-n/4}\sum_{\gamma\in \Gamma_\infty\backslash\operatorname{SL}_2(\ZZ)}\lambda_f(\varphi_{\mu,f})(\gamma,s_0)
        \cdot \lambda(\varphi_\infty)(\gamma g_\tau,s_0)
        \\
        &=
        v^{-n/4}\sum_{\gamma\in \Gamma_\infty\backslash\operatorname{SL}_2(\ZZ)}
        (\omega_f(\gamma)\varphi_{\mu,f})(0)
        \cdot
        (\omega_\infty(\gamma g_\tau)\varphi_\infty)(0)
        .
    \end{align*}
    We compute the last two factors separately, beginning with the archimedean one.
    Since $\mathrm{SO}_2(\RR)$ is the stabilizer in~$\SL_2(\RR)$ of the point~$i$, there exists a~$\kappa_\theta\in\mathrm{SO}_2(\RR)$ such that~$\gamma g_\tau=g_{\gamma\tau}\kappa_\theta$.
    Since
    \begin{align*}
    (c\tau+d)v^{-1/2}
    &=j(\gamma,\tau) j(g_\tau,i)=j(\gamma g_\tau,i)=j(g_{\gamma\tau}\kappa_\theta,i)=j(g_{\gamma\tau},i)j(k_\theta,i)
    \\
    &=
    \lvert c\tau+d\rvert v^{-1/2} j(k_\theta,i),
    \end{align*}
    we deduce that $e^{-i\theta}=j(\kappa_\theta,i)=(c\tau+d)/\lvert c\tau+d\rvert$.
    By~\eqref{eq:weileigr1} we then compute
    \begin{align*}
         (\omega_\infty(\gamma g_\tau)\varphi_\infty)(0)
         &=
         (\omega_\infty(g_{\gamma\tau})\omega_\infty(\kappa_\theta)\varphi_\infty)(0)
         =\Big(\frac{c\tau+d}{\lvert c\tau+d\rvert}\Big)^{-n/2}(\omega_\infty(g_{\gamma\tau})\varphi_\infty)(0)
         \\
         &=v^{n/4}\Big(\frac{c\tau+d}{\lvert c\tau+d\rvert}\Big)^{-n/2} \lvert c\tau+d\rvert^{-n/2}\varphi_\infty(0)
         =v^{n/4}(c\tau+d)^{-n/2}.         
    \end{align*}

    We now consider the non-archimedean factor.
    By Remark~\ref{rem:weiltoweil}, the value~$(\omega_f(\gamma)\varphi_{\mu,f})(0)$ is the coefficient of~$e_\mu$ in the rewriting of~$\rho_L(\gamma)^{-1} e_0$ over the standard basis of~$\CC[L'/L]$.
    Summarizing, we deduce that
    \[
    j(g_\tau,i)^{n/2} E(g_\tau,s_0,\lambda(\varphi_\mu))= \mu\textup{-th component of }E_{n/2,\rho_L}.
    \]

    
    Since $V$ is positive definite, the theta integral over $H(\QQ)\backslash H(\A)$ reduces to the sum over lattices $M\in\operatorname{gen}(L)$, weighted by $\Orth(M)$.
    More precisely, decompose
    \[
        \Orth(V)(\A_f)=\bigcup_{j=1}^{h(L)}\Orth(V)(\QQ)h_j \Orth(\hat{L})
    \]
    for suitable $h_j\in \Orth(V)(\A_f)$, where~$h(L)$ denotes the \emph{class number} of $L$, i.e.,  the number of isometry classes in~$\mathrm{gen}(L)$. 
    Note that $\Orth(h_jL)=\Orth(V)\cap(\Orth(V)(\RR)h_j \Orth(V)(\A_f)h_j^{-1})$. Then
    \begin{align*}
        &v^{-n/4}\int_{H(\QQ)\backslash H(\A)}\theta(g_\tau,h,\varphi_\mu)dh
        \\
        &\quad=\mu\textup{-th component of }\bigg(\sum_j\frac{1}{|\Orth(h_jL)|}\bigg)^{-1}\sum_{j=1}^{h(L)}\frac{1}{|\Orth(h_jL)|}\theta_{h_j L'/h_jL}^{\mathrm{sym}}(\tau).
    \end{align*}
    This recovers Theorem~\ref{thm_SWfvvp}, namely the Siegel--Weil formula for positive definite lattices.

\section{The geometric Siegel--Weil formula}\label{sec:geomSWF}

In the previous sections, we saw that, for a positive definite lattice $L$, the representation numbers $r_L(m)$ are finite and appear as Fourier coefficients of certain theta series.
The modularity of the latter gives a way to study the arithmetic of quadratic forms in terms of properties of modular forms.
The Siegel--Weil formula shows that the average value of the theta functions arising from the isometry classes in the genus of~$L$ is an Eisenstein series.

In the indefinite case, however, the representation numbers are typically infinite.
Geometrically, this reflects the non-compactness of the real orthogonal group $\Orth(L)(\mathbb{R})$.
Instead, one can replace the representation \emph{numbers} with the \emph{cohomology classes} of certain algebraic cycles, known as ``special cycles'', on orthogonal Shimura varieties.
As proved by Kudla and Millson in much greater generality \cite{kudlamillson}, the generating series of such cycles behave as modular forms with values in cohomology groups.

The starting point for these developments was the celebrated work  of Hirzebruch and Zagier~\cite{hirzebruchzagier}, who showed that the intersection numbers of Hirzebruch--Zagier divisors on Hilbert modular surfaces occur as Fourier coefficients of certain elliptic holomorphic modular forms of weight~$2$.
Kudla and Millson~\cite{kudlamillson} vastly generalized this phenomenon to Shimura varieties (and more generally locally symmetric spaces) of orthogonal type and to special cycles of higher codimension, using theta functions to construct cohomology-valued modular generating series; see also \cite[Section~7]{kudla-algcy} for a summary.

In this framework, the \emph{geometric Siegel--Weil formula} relates the generating series of \emph{volumes} of special cycles to Eisenstein series.

\subsection{Orthogonal Shimura varieties}
\label{sect:5.1}
Let $(V,Q)$ be a quadratic space over $\QQ$ of signature~$(n,2)$. For simplicity, assume that $n$ is \emph{even}.
Let $L\subset V$ be an even lattice.

It is sometimes more convenient to replace~$\operatorname{SO}(V)$ with the general Spin group 
\[
    H=\operatorname{GSpin}(V)=\{g\in C^0(V)^\times\mid gVg^{-1}=V\},
\]
which is a reductive algebraic group defined over $\QQ$. Here~$C^0(V)$ denotes the even Clifford algebra, see e.g.~\cite[Part II, Section~2]{1-2-3} for details.
We may regard $H$ as a central extension of~$\operatorname{SO}(V)$, fitting into a short exact sequence
\[
    1\rightarrow Z\coloneq \mathbb{G}_m\rightarrow \operatorname{GSpin}(V)\rightarrow\operatorname{SO}(V)\rightarrow 1.
\]

By
\[
    D=\{z\in V_\CC\mid \; (z,z)=0, (z,\bar{z})<0\}/\CC^\times=D^+\cup D^-
\]
we denote the corresponding Hermitian symmetric space, on which $\operatorname{SO}(V\otimes\RR)$ (and hence~$H(\RR$)) acts transitively.
It has two connected components, denoted~$D^+$ and~$D^-$.
Let~$K\subset H(\A_f)$ be a compact open subgroup.
A standard choice is
\begin{equation}\label{eq:stdchK}
    K=C^0(\hat{L})^\times\cap H(\A_f),
\end{equation}
whose image under the map~$H(\A_f)\to\mathrm{SO}(V)(\A_f)$ is the \emph{discriminant kernel} of~$L$, i.e., the largest compact open subgroup that stabilizes~$\hat{L}$ and acts as the identity on $\hat{L}'/\hat L$.


The double quotient
\[
X_K(\CC)\coloneq H(\QQ)\backslash D\times H(\A_f)/K
\]
is the complex space of a \emph{Shimura variety} $X_K$ defined over $\QQ$.
By the theorem of Baily and Borel, it is a quasi-projective variety of $\dim X_K=n$.

Let $H(\RR)^+$ be the connected component of $H(\RR)$ containing the identity.
There are finitely many $h_j\in H(\A_f)$
such that $H(\A_f)$ decomposes into a disjoint union of the form
\[
    H(\A_f)=\bigcup_j H(\QQ)H(\RR)^+h_jK.
\]
Then we may split $X_K(\CC)$ in connected components as
\[
    X_K(\CC)=\bigcup_j X_j,
\]
where
\begin{equation}\label{eq:concompshim}
    X_j=\Gamma_j\backslash D^+ \qquad\text{and}\qquad\Gamma_j=H(\QQ)\cap(H(\RR)^+h_jKh_j^{-1}).
\end{equation}

Till the end of Section~\ref{sec:geomSWF}, we will mostly work with the space~$X_K(\CC)$ of complex points of~$X_K$. Therefore, to simplify the notation, we usually drop~$\CC$ and simply write~$X_K$.

We remark that:
\begin{itemize}
    \item If $K$ is chosen as in~\eqref{eq:stdchK} and~$L$ splits a hyperbolic plane, then $X_K$ is connected.
    \item We say that~$K$ is \emph{neat} if the multiplicative subgroup of~$\CC^\times$ generated by the eigenvalues of the elements of~$K$ has no torsion.
    This prevents every~$\Gamma_j$ in~\eqref{eq:concompshim} from having torsion, and makes~$X_K$ a \emph{smooth} variety.
    The open compact~\eqref{eq:stdchK} is not neat in general.
\end{itemize}

\begin{Exa}\label{ex:Hilbertmodsurf}
    Let $\QQ\subset F$ be a real quadratic extension, and let $\mathcal{O}_F$ be its ring of integers. Let
    \[
    V=\{x\in F^{2\times 2}\mid {}^tx=x'\},
\]
where $x'$ denotes the conjugate of $x$, and define the quadratic form $Q(x)=\det(x)$.
We consider $V$ as a quadratic space over~$\QQ$. It is easy to see that $\dim_\QQ (V)=4$ and $V$ has signature $(2,2)$.
In this case, we have that
\[
    H(\QQ)\cong\{g\in \operatorname{GL}_2(F)\mid\; \det(g)\in\QQ^\times\}\qquad\text{and}\qquad D^+\cong\HH\times\HH.
\]
For $K$ as in~\eqref{eq:stdchK}, the connected component of the Shimura variety~$X_K$ arising from~$h_j=1\in H(\A_f)$ is the \emph{Hilbert modular surface}~$\operatorname{SL}_2(\mathcal{O}_F)\backslash\HH^2$.
\end{Exa}


\subsection{Special cycles}\label{sec:specialcycles}
Let $x\in V$ with $Q(x)>0$. Then $V_x=x^\bot$ is a quadratic space of signature $(n-1,2)$. Let $H_x$ be the stabilizer of $x$ in $H$ and $D_x=\{z\in D\mid z\ \bot \ x\}$.
For~$h\in H(\A_f)$ we put $K_{h,x}=H_x(\A_f)\cap hKh^{-1}$. The map
\begin{equation}\label{eq:projmapncyc}
\begin{split}
    H_x(\QQ)\backslash D_x\times H_x(\A_f)/K_{h,x}&\rightarrow X_K\\
    [z,h_1]&\mapsto[z,h_1h]
\end{split}
    \end{equation}
defines a codimension 1 algebraic cycle $Z(x,h)$ in the Shimura variety $X_K$, which is defined over~$\QQ$.
Note that the left-hand side of~\eqref{eq:projmapncyc} is (the set of complex points of) an orthogonal Shimura variety of dimension~$n-1$.
Therefore, $Z(x,h)$ may be viewed as a Shimura subvariety of the same type as $X_K$.

The construction above easily generalizes to higher codimension as follows.
Let~$r\leq n$.
For any tuple $x=(x_1,\ldots ,x_r)\in V^r$ with positive definite moment matrix $Q(x)=\frac{1}{2}((x_i,x_j))_{i,j}\in\QQ^{r\times r}$, we define a quadratic space $V_x=\cap_jx_j^\perp$ of signature~$(n-r,2)$ and denote by~$H_x$ the pointwise stabilizer  in~$H$ of the span of the entries of~$x$.
We define~$K_{h,x}=H_x(\A_f)\cap hKh^{-1}$, so that now the projection~\eqref{eq:projmapncyc} gives rise to an algebraic cycle~$Z(x,h)$ of codimension~$r$ in $X_K$.
For more details on these cycles and the ones constructed in the upcoming section, we refer to ~\cite{kudla-algcy}.

\subsection{Weighted special cycles}
We now define some cycles that arise as weighted formal sums of the cycles constructed in Section~\ref{sec:specialcycles} under the assumption that~$K$ is \emph{neat}.
The reason for this assumption is that it makes the definition of the cycles in \eqref{eq:weightedspcy} below easier: For general~$K$ one may have to correct the multiplicities of the irreducible components of such cycles to obtain a generating series with good modularity properties.

Let $T\in\operatorname{Sym}_r(\QQ)$ be positive definite, and let $\varphi\in S(V(\A_f)^r)^K$, i.e., $\varphi$ is $K$-invariant. If there exists $x\in V^r$ such that $Q(x)=T$, we construct the codimension~$r$ \emph{weighted special cycle}~$Z(T,\varphi)$ to be the finite formal sum with complex coefficients
\begin{equation}\label{eq:weightedspcy}
    Z(T,\varphi)\coloneq\sum_{h\in H_x(\A_f)\backslash H(\A_f)/K}\varphi(h^{-1}x)Z(x,h)
    .
\end{equation}
If there is no such $x$, then we define $Z(T,\varphi)=0$.
By Witt's theorem, the definition of~$Z(T,\varphi)$ does not depend on the choice of~$x\in V^r$ with~$Q(x)=T$.

By Poincaré duality, see e.g.~\cite[Remark~2.6.3]{zuffetti}, the cycle~$Z(T,\varphi)$ induces a cohomology class $[Z(T,\varphi)]$ in the de Rham cohomology group $H^{2r}(X_K,\CC)$.

We now extend the previous definition to matrices $T\geq 0$, i.e., matrices that are positive semidefinite but possibly singular.
The problem of~\eqref{eq:weightedspcy} is that the cycle is now of codimension~$\mathrm{rk}(T)$ rather than~$r$.
To construct a cohomology class of degree~$2r$, we proceed as follows.
Let~$\mathcal{L}$ and $\mathcal{L}^\vee$ be respectively the tautological bundle of $X_K$ and its dual bundle. They induce cohomology classes $[\mathcal{L}]$ and~$[\mathcal{L}^\vee]\in H^2(X_K,\CC)$ with the usual property that~$[\mathcal{L}^\vee]=-[\mathcal{L}]$. The line bundle~$\mathcal{L}$ is also known as the \emph{Hodge bundle}.
Then we define
    \[
        [Z(T,\varphi)]\coloneq \Big(\sum_{h\in H_x(\A_f)\backslash H(\A_f)/K}\varphi(h^{-1}x)[Z(x,h)]\Big)\cup [\mathcal{L}^\vee]^{r-\mathrm{rk}(T)} \in H^{2r}(X_K,\CC).
    \]

\subsection{Generating series of cycles}
Let~$K$ be neat and let~$\varphi\in S(V(\A_f)^r)$ be~$K$-invariant, as above.
We denote by~$\HH_r$ the Siegel upper half-space in genus~$r$, namely the set of symmetric complex~$r\times r$ matrices with positive definite imaginary part.
It is the Hermitian symmetric domain attached to~$\Sp_r$.

The \emph{generating series of weighted special cycles} in codimension~$r$ is the formal series
    \[
        \Theta_r^{\mathrm{coh}}(\tau,\varphi)=\sum_{T\geq 0}[Z(T,\varphi)]q^T,\quad \text{where}\quad  q\coloneq e^{2\pi i \operatorname{tr}(T\tau)} \quad\text{and}\quad\tau\in\HH_r.
    \]
    This looks like a Fourier expansion of a genus~$r$ Siegel modular form, but a priori it could be not even convergent.
    
The following result of Kudla and Millson~\cite{kudlamillson} ensures that~$\Theta_r^{\mathrm{coh}}$ is the $q$-expansion of a Siegel modular form with values in $H^{2r}(X_K,\CC)$, in the sense that
\[
\Lambda(\Theta_r^{\mathrm{coh}})=\sum_{T\geq 0}\Lambda\big([Z(T,\varphi)]\big)q^T
\]
is a scalar-valued modular form for every linear functional~$\Lambda\colon H^{2r}(X_K,\CC)\to\CC$.
In particular, if~$\eta$ is a degree~$2(n-r)$ compactly supported closed differential form on~$X_K$, then~$\int_{X_K}\Theta_r^{\mathrm{coh}}\wedge \eta$ is a scalar-valued modular form.

\begin{teor}[Kudla--Millson]\label{KM}
Let $\varphi\in S(V(\A_f)^r)^K$.
  The series  $\Theta_r^{\mathrm{coh}}(\tau,\varphi)$ is a holomorphic Siegel modular form of weight $1+n/2$ and genus $r$.
  More precisely, let~$\omega_f$ be the Weil representation of $\operatorname{Sp}_r(\A_f)$ on~$S(V(\A_f)^r)$, and consider the modular group $\operatorname{Sp}_r(\ZZ)$ embedded diagonally in~$\operatorname{Sp}_r(\A_f)$. Then 
    \[
        \Theta_r^{\mathrm{coh}}(\gamma\tau,\varphi)=\det(c\tau+d)^{1+n/2}\Theta_r^{\mathrm{coh}}(\tau,\omega_f(\gamma^{-1})(\varphi))
    \]
    for all $\gamma \in \mathrm{Sp}_r(\ZZ)$.
\end{teor}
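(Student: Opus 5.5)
The plan is to realize $\Theta_r^{\mathrm{coh}}(\tau,\varphi)$ as the cohomology class of an honest theta function, built from the Weil representation of Section~\ref{sec:adverSWF}. At the archimedean place we replace the Gaussian by the Kudla--Millson Schwartz form $\varphi_{KM}\in \big(S(V(\RR)^r)\otimes \mathcal{A}^{r,r}(D)\big)^{H(\RR)}$. This is a Schwartz function on $V(\RR)^r$ with values in closed $(r,r)$-forms on $D$, and it is $H(\RR)$-invariant. For $\varphi\in S(V(\A_f)^r)^K$ we form
\[
\theta(\tau,z,h,\varphi)=\det(v)^{-\frac{1}{2}(1+n/2)}\sum_{x\in V(\QQ)^r}\varphi(h^{-1}x)\,\big(\omega_\infty(g_\tau)\varphi_{KM}\big)(x,z),
\]
with $g_\tau=n(u)m(v^{1/2})$ as in \eqref{eq:gtauforgen1}. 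This descends to a closed $2r$-form on $X_K$. By Poisson summation, exactly as in the invariance of $\theta(g,h,\varphi)$ under $G(\QQ)$, it transforms under $\gamma\in\Sp_r(\ZZ)$ with the twist $\omega_f(\gamma^{-1})$ on $\varphi$. The automorphy factor $\det(c\tau+d)^{1+n/2}$ comes from the $K_\infty'$-type of $\varphi_{KM}$: one checks, as in \eqref{eq:weileigr1}, that $\omega_\infty(k)\varphi_{KM}=\det(\vect{k})^{(n+2)/2}\varphi_{KM}$ \emph{up to exact forms}. This gives a nonholomorphic modular form valued in closed forms, and its class in $H^{2r}(X_K,\CC)$ is modular.

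Next we compute the class Fourier coefficient by Fourier coefficient. We group the sum by $Q(x)=T$ and unfold over $H(\QQ)$-orbits; Witt's theorem shows each $T$ gives a single orbit. The $T$-th term then becomes $\sum_h\varphi(h^{-1}x)\sum_{\gamma\in\Gamma_{x}\backslash\Gamma}\gamma^*\varphi_{KM}(\sqrt v x)\,e^{2\pi i\tr(T\tau)}$, written in terms of the cycles $Z(x,h)$ of \eqref{eq:projmapncyc}. The key local fact, which is the Kudla--Millson Thom form property, is that for $T>0$ the form $\varphi_{KM}(x)$ is a Thom form of the normal bundle of $D_x$ in $D$. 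Hence, for a compactly supported closed $\eta$, the pairing $\int_{\Gamma_x\backslash D}\varphi_{KM}(\sqrt v x)\wedge\eta$ equals $\int_{\Gamma_x\backslash D_x}\eta$, independently of $v$. The independence of $v$ follows because the $v$-derivative is exact by a homotopy/transgression formula. Summing over $h$ recovers $[Z(T,\varphi)]$.

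For singular $T\ge0$ of rank $t<r$, the same argument applies to the nondegenerate part. The degenerate directions contribute $\varphi_{KM}$ evaluated at $0$ in $r-t$ variables, and this is a power of the Chern form of $\mathcal{L}^\vee$. That produces the factor $[\mathcal{L}^\vee]^{r-\mathrm{rk}(T)}$. Terms with $T$ not positive semidefinite are exact, or have vanishing pairing, and so drop out. The holomorphy of $\Lambda(\Theta_r^{\mathrm{coh}})$ then follows from the resulting expansion, which is supported only on $T\ge0$ with coefficients independent of $v$.

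The main obstacle is the analytic one: establishing the Thom form property and making the exactness statements rigorous on the noncompact quotient $X_K$. That is, we need to pair only with compactly supported $\eta$ and control convergence of the unfolded sums. A second delicate point is the archimedean $K'_\infty$-equivariance being valid only modulo exact forms, which must be handled on the level of cohomology. I would first work with $r=1$, where the explicit formula of $\varphi_{KM}$ makes both points computable, and then obtain general $r$ via the product formula $\varphi_{KM}^{(r)}=\varphi_{KM}^{(1)}\wedge\cdots\wedge\varphi_{KM}^{(1)}$.
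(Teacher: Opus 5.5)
Your proposal is correct and takes the same route as the paper. You realize $\Theta_r^{\mathrm{coh}}(\tau,\varphi)$ as the cohomology class of the Kudla--Millson theta form $\theta_{\mathrm{KM},r}(\tau,\varphi)$, get modularity from Poisson summation and the weight $1+n/2$ of $\varphi_{\mathrm{KM}}^r$, and identify the class coefficientwise by unfolding and the Thom form property, which fills in the step the paper simply delegates to Kudla--Millson.

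Three small corrections. First, the equivariance $\omega_\infty(k)\varphi_{\mathrm{KM}}^r=\det(\vect{k})^{1+n/2}\varphi_{\mathrm{KM}}^r$ holds exactly, not just up to exact forms; exactness enters only through the $v$-independence of the paired coefficients, which is where holomorphy comes from. Second, for that $v$-independence you must pair $\varphi_{\mathrm{KM}}^r(\sqrt v x)\,e^{2\pi\tr(Tv)}$ rather than $\varphi_{\mathrm{KM}}^r(\sqrt v x)$, since otherwise the Gaussian factor is counted twice. Third, for non-compact $X_K$ the coefficients of singular $T\geq 0$ also contain tuples $x$ whose span contains a nonzero isotropic vector; these are not covered by your ``degenerate directions give $\varphi_{\mathrm{KM}}(0)$'' step and must be discarded by the same transgression argument you use for $T\not\geq 0$.
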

For a sufficiently large~$N$, $\varphi$ is~$\Gamma_r(N)$-invariant under~$\omega_f$, where~$\Gamma_r(N)=\ker(\Sp_r(\hat{\ZZ})\to\Sp_r(\ZZ/N\ZZ))$.
In particular, Theorem~\ref{KM} implies that~$\Theta_r^{\mathrm{coh}}(\varphi)$ is a Siegel modular form with level.

\begin{proof}[Strategy of the proof]
The idea is to realize $\Theta_r^{\mathrm{coh}}$ as the cohomology class of a theta function with values in closed differential forms.
Such a theta function arises from a particular Schwartz function~$\varphi_{\mathrm{KM}}^r$, nowadays known as the \emph{Kudla--Millson Schwartz form}, constructed in~\cite{kudlamillsonI} and~\cite{kudlamillsonII}.
    It is a Schwartz function on~$x\in V(\RR)^r$ with values in differential~$2r$-forms on~$D$, more precisely
    \[
        \varphi_{\mathrm{KM}}^r\in[S(V(\RR)^r)\otimes A^{2r}(D)]^{H(\RR)},
    \]
    where~$A^{2r}(D)$ denotes the space of differential~$2r$-forms on~$D$ and the superscript~$H(\RR)$ means that $\varphi_{\mathrm{KM}}^r$ is $H(\RR)$-invariant, in the sense that $h^*\varphi_{\mathrm{KM}}^r(hx)=\varphi_{\mathrm{KM}}^r(x)$ for all~$x\in V(\RR)^r$ and~$h\in H(\RR)$.
    Such a Schwartz form is \emph{closed} with respect to the exterior differential on~$D$, and for fixed~$x$ the form~$\varphi_{\mathrm{KM}}^r (x)\in A^{2r}(D)$ is Poincaré dual to the cycle $D_x\subset D$.
    Furthermore, it has weight $k=1+n/2$ with respect to the standard maximal compact subgroup $U(r)\subset\operatorname{Sp}_r(\RR)$.

Therefore, the theta series arising from~$\varphi_{\mathrm{KM}}^r$ constructed as usual as
    \begin{equation}\label{eq:inproofthetaKM}
        \theta_{\mathrm{KM},r}(\tau,z,h,\varphi)=\det(v)^{-k/2}\sum_{x\in V(\QQ)^r}\varphi(h^{-1}x)(\omega_\infty(g_\tau)\varphi_{\mathrm{KM}}^r)(x,z),\qquad \tau=u+iv,
    \end{equation}
    is a \emph{non-holomorphic} modular form of weight $k$ with representation $\omega_f^\vee$ for $\operatorname{Sp}_r(\ZZ)$.
    Here~$g_\tau=n(u)m(a)$ is the generalization of~\eqref{eq:gtauforgen1} to higher genus, where~$a\in\mathrm{GL}_r(\RR)^+$ is chosen such that $a\, {^ta}=v$, so that~$g_\tau (i1_r)=\tau$.
    We may think of~\eqref{eq:inproofthetaKM} as a differential form on~$X_K$ with respect to the variables~$(z,h)$.
    
    In~\cite{kudlamillson} it is proved that, after passing to cohomology, the class~$[\theta_{\mathrm{KM},r}(\tau,h,\varphi)]$ becomes \emph{holomorphic} with respect to~$\tau$, and equals the generating series~$\Theta_r^{\mathrm{coh}}(\tau,\varphi)$.
\end{proof}
These generating series behave well with respect to the cup product in cohomology, as illustrated in the following result, see~\cite[Theorem~6.2]{kudla-algcy}.

\begin{teor}[Kudla--Millson]
    Let $r_1,r_2\in\ZZ_{\geq 0}$ be such that $r_1+r_2=r$. Let $\varphi_j\in S(V(\A_f)^{r_j})^K$. Then 
    \[
        \Theta_{r_1}^{\mathrm{coh}}(\tau_1,\varphi_1)\cup\Theta_{r_2}^{\mathrm{coh}}(\tau_2,\varphi_2)=\Theta_r^{\mathrm{coh}}\left(\left(\begin{matrix}\tau_1&0\\ 0&\tau_2\end{matrix}\right),\varphi_1\otimes\varphi_2\right).
    \]
    In particular,
    \[
        [Z(T_1,\varphi_1)]\cup[Z(T_2,\varphi_2)]=\sum_{\substack{T\in\operatorname{Sym}_r(\QQ)\\
        T=\left(\begin{smallmatrix}T_1&*\\ *&T_2\end{smallmatrix}\right)\geq0}}[Z(T,\varphi_1\otimes\varphi_2)].
    \]
\end{teor}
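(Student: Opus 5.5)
The plan is to prove the identity at the level of differential forms, using the Kudla--Millson theta function $\theta_{\mathrm{KM},r}$ from the proof of Theorem~\ref{KM}, and then pass to cohomology. The key input is the product property of the Kudla--Millson Schwartz forms: for $x=(x^{(1)},x^{(2)})\in V(\RR)^{r_1}\times V(\RR)^{r_2}$ one has
\[
\varphi_{\mathrm{KM}}^{r}(x)=\varphi_{\mathrm{KM}}^{r_1}(x^{(1)})\wedge\varphi_{\mathrm{KM}}^{r_2}(x^{(2)}),
\]
which holds by construction in \cite{kudlamillsonI}, since $\varphi_{\mathrm{KM}}^r$ is built as an $r$-fold wedge product of the genus-one form $\varphi_{\mathrm{KM}}^1$ evaluated on the individual components. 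The Weil representation is compatible with this splitting. For $\tau=\mathrm{diag}(\tau_1,\tau_2)$ we may choose $g_\tau=\mathrm{diag}(g_{\tau_1},g_{\tau_2})$ inside $\Sp_{r_1}\times\Sp_{r_2}\hookrightarrow\Sp_r$. Then $\omega_\infty(g_\tau)$ acts as $\omega_\infty(g_{\tau_1})\otimes\omega_\infty(g_{\tau_2})$, the normalizing factor satisfies $\det(v)=\det(v_1)\det(v_2)$, and for a pure tensor $\varphi_1\otimes\varphi_2$ the finite part factors as $(\varphi_1\otimes\varphi_2)(h^{-1}x)=\varphi_1(h^{-1}x^{(1)})\varphi_2(h^{-1}x^{(2)})$.

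Summing over $x\in V(\QQ)^r=V(\QQ)^{r_1}\times V(\QQ)^{r_2}$ then gives an identity of closed forms on $X_K$:
\[
\theta_{\mathrm{KM},r_1}(\tau_1,\varphi_1)\wedge\theta_{\mathrm{KM},r_2}(\tau_2,\varphi_2)=\theta_{\mathrm{KM},r}\bigl(\mathrm{diag}(\tau_1,\tau_2),\varphi_1\otimes\varphi_2\bigr).
\]
Absolute convergence of these theta series lets us interchange the wedge product with the sums. Taking cohomology classes, the wedge product becomes the cup product. By the cohomological statement in the proof of Theorem~\ref{KM}, each class equals the corresponding $\Theta^{\mathrm{coh}}$, and this yields the first identity. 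The one step needing care is that the cohomology class of the theta form at the block-diagonal point $\mathrm{diag}(\tau_1,\tau_2)\in\HH_r$ agrees with $\Theta_r^{\mathrm{coh}}$ evaluated there. This is simply the restriction of an identity valid for all $\tau\in\HH_r$, so no new input is needed.

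For the ``in particular'' statement, we compare Fourier expansions. On the right-hand side, write $T=\begin{pmatrix}T_1&B\\{}^tB&T_2\end{pmatrix}$. Then $\operatorname{tr}(T\,\mathrm{diag}(\tau_1,\tau_2))=\operatorname{tr}(T_1\tau_1)+\operatorname{tr}(T_2\tau_2)$, so $q^T$ specializes to $q_1^{T_1}q_2^{T_2}$. Collecting the coefficient of $q_1^{T_1}q_2^{T_2}$ sums $[Z(T,\varphi_1\otimes\varphi_2)]$ over all $T\ge 0$ with the given diagonal blocks. On the left-hand side the same coefficient is $[Z(T_1,\varphi_1)]\cup[Z(T_2,\varphi_2)]$. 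Equating the two gives the formula.

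The main obstacle is justifying the coefficient comparison. First, for fixed $T_1,T_2$ only finitely many off-diagonal blocks $B$ give a nonzero $Z(T,\varphi_1\otimes\varphi_2)$: they must come from vectors in a fixed lattice, and $T\ge0$ bounds the entries of $B$ by $|B_{ij}|^2\le (T_1)_{ii}(T_2)_{jj}$. So the sum is finite. Second, uniqueness of Fourier expansions in $(\tau_1,\tau_2)$ requires convergence, which is supplied by the modularity in Theorem~\ref{KM}: after applying any linear functional on cohomology, we are comparing absolutely convergent $q$-expansions.
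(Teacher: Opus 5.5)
Your proposal is correct and follows the same route as the paper. The paper also starts from the factorization $\varphi_{\mathrm{KM}}^{r_1}\wedge\varphi_{\mathrm{KM}}^{r_2}=\varphi_{\mathrm{KM}}^{r}$, deduces the corresponding product formula for the theta forms \eqref{eq:inproofthetaKM}, and passes to cohomology. Your extra details correctly fill in what the paper leaves implicit: the compatibility of $\omega_\infty$ and $g_\tau$ with the block-diagonal embedding $\Sp_{r_1}\times\Sp_{r_2}\hookrightarrow\Sp_r$, and the Fourier-coefficient comparison, including finiteness of the sum over the off-diagonal blocks.
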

\begin{proof}[Idea of the proof]
    It follows from a similar formula for the theta functions~\eqref{eq:inproofthetaKM} arising from the Kudla--Millson Schwartz forms, induced by the identity~$\varphi_{\mathrm{KM}}^{r_1}(x_1,z)\wedge\varphi_{\mathrm{KM}}^{r_2}(x_2,z)=\varphi_{\mathrm{KM}}^r((x_1,x_2),z)$, see~\cite{kudlamillsonI} and~\cite{kudlamillsonII}.
\end{proof}

\subsection{The geometric Siegel--Weil formula}
We explain here how the Siegel--Weil formula generalizes to the case of generating series of special cycles.
The idea is to replace the average integral \eqref{eq:integralaveragegen} of the theta function on $X_K$ with an integral of the generating series~$\Theta_r^{\mathrm{coh}}(\varphi)$ of special cycles cupped with a suitable power of a Kähler form on~$X_K$.

We denote by $\Omega$ the first Chern form of the line bundle $\mathcal{L}$, where the latter is endowed with a certain Hermitian metric arising from the bilinear form of~$V$ as in~\cite[(1.5)]{kudla-integrals}.
The form~$\Omega$ is a $H(\RR)$-invariant \emph{Kähler form}. The induced Kähler metric on~$X_K$ is such that the \emph{volume} of the codimension~$r$ special cycles
\[
\mathrm{deg}(Z(T,\varphi))
=
\int_{Z(T,\varphi)}\Omega^{n-r}
=
(-1)^{r-\mathrm{rk}(T)}\sum_{h\in H_x(\A_f)\backslash H(\A_f)/K}\varphi(h^{-1}x)\int_{Z(x,h)}\Omega^{n-\mathrm{rk}(T)},
\]
for $T$ positive semidefinite of rank~$\mathrm{rk}(T)$, is finite.
Here~$x$ is any $x\in V^r$ such that $Q(x)=T$, as in~\eqref{eq:weightedspcy}.

Recall that a~$L^2$-differential form on~$X_K$ is a smooth square-integrable form whose exterior derivative is still square-integrable.
Here the square-integrability is with respect to the Kähler structure induced by~$\Omega$.
The Kähler form~$\Omega$ is an example of an $L^2$-form.

We denote by~$\sqH^*(X_K,\CC)$ the cohomology of the complex of $L^2$-forms.
The inclusion of such complex in the de Rham complex induces homomorphisms
\begin{equation}\label{eq:fromL2intoderham}
\sqH^{2r}(X_K,\CC)\to H^{2r}(X_K,\CC),
\end{equation}
which in general are neither injective nor surjective. We denote the $L^2$-, resp.\ de Rham, cohomology class of a~$L^2$-form~$\alpha$ by~$[\alpha]_{(2)}$, resp.~$[\alpha]$.

We define the \emph{degree map} as
\begin{align*}
    \operatorname{deg}\colon \sqH^{2r}(X_K,\CC)&\rightarrow \CC,
    \qquad [\alpha]_{(2)}\mapsto \int_{X_K}\alpha\wedge\Omega^{n-r}.
\end{align*}
Since~$X_K$ is a complete Kähler manifold, Stokes' theorem holds by \cite{gaffney}. Hence the integral on~$X_K$ of a top-degree exact $L^2$-form vanishes and the integral $\deg([\alpha]_{(2)})$ does not depend on the choice of the representative of the cohomology class~$[\alpha]_{(2)}$.

For simplicity, from now on we assume that~$r<\frac{n-1}{2}$.
Under this condition, the homomorphism~\eqref{eq:fromL2intoderham} is \emph{bijective}, see~\cite[Section~3.1]{BLMM} for details in greater generality.
Recall that~$\Theta_r^{\mathrm{coh}}(\varphi)$ is realized as the cohomology class of the Kudla--Millson theta form~$\theta_{\mathrm{KM},r}(\varphi)$ defined in~\eqref{eq:inproofthetaKM}, which is non-holomorphic in the variable~$\tau$.
This theta form is~$L^2$, see e.g.~\cite[Section~4]{bruinierzuffetti}.
Let
\[
\theta_{\mathrm{KM},r}(\tau,\varphi)=\sum_{T} c_T(v,\varphi) e^{2\pi i \tr Tu},\qquad \tau=u+iv\in\HH_r,
\]
be the Fourier expansion of~$\theta_{\mathrm{KM},r}(\varphi)$.
The Fourier coefficients indexed by~$T\geq0$ are Poincaré dual to the special cycles~$Z(T,\varphi)$, in the sense that
\begin{align*}
\int_{X_K} c_T(v,\varphi) \wedge \eta &= e^{-2\pi \tr Tv}\int_{Z(T,\varphi)} \eta
\\
&=
e^{-2\pi \tr Tv}
(-1)^{r-\mathrm{rk}(T)}
\sum_{h\in H_x(\A_f)\backslash H(\A_f)/K}\varphi(h^{-1}x)\int_{Z(x,h)}\Omega^{r-\mathrm{rk}(T)}\wedge \eta
\end{align*}
for every compactly supported $2(n-r)$-form~$\eta$.
By~\cite[Theorem~2.1]{kudlamillson-tubes}, see also~\cite[Theorem~4.20]{kudla-integrals}, the previous equalities are satisfied also for \emph{bounded} $\eta$, e.g.~$\eta=\Omega^{n-r}$, not necessarily of compact support.

These observations and Theorem \ref{KM} imply that
\begin{equation}\label{eq:scugsofc}
    \operatorname{deg}\Theta_r^{\mathrm{coh}}(\tau,\varphi)=
    \int_{X_K}\theta_{\mathrm{KM},r}(\tau,\varphi)\wedge \Omega^{n-r}
    =
    \sum_{T\geq 0}\operatorname{deg}\big(Z(T,\varphi)\big)q^T
\end{equation}
is a holomorphic Siegel modular form of weight $k=1+n/2$ and genus $r$.
As we will see below, the geometric Siegel--Weil formula realizes such generating series of volumes as an Eisenstein series.


We now construct an Eisenstein series similarly as we did in Section~\ref{sec:SiegEisser}, and refer to~\cite[Section~I]{kudla-survey} for details.
The space of the principal series induced representation splits as
\[
I_r(s,\chi_V)=I_{r,\infty}(s,\chi_V)\otimes I_{r,f}(s,\chi_V).
\]
An intertwining map~$\lambda_\infty$ and~$\lambda_f$ can be defined on each factor similarly as in Remark~\ref{rem:intertwining}.

The standard compact maximal~$K_\infty'$ of~$\Sp_r(\RR)$ constructed in~\eqref{eq:compmaxofsympl} admits infinitely many characters~$\chi_m$ for $m\in\ZZ$, defined as
\[
\chi_m(\vect{k})= \det(\vect{k})^m\qquad \text{for every~$\vect{k}\in K_\infty'$}.
\]
If~$\varphi_\infty\in S(V(\RR)^r)$ transforms under~$K_\infty'$ with respect to the Weil representation with the character~$\chi_m$, then~$\lambda_\infty(\varphi_\infty)$ is the unique standard section~$\Phi_m(s)\in \mathcal{I}_{r,\infty}(s,\chi_\infty)$ such that~$\Phi_m(\vect{k},s)=\chi_m(\vect{k})$.

For $\varphi\in S(V(\A_f)^r)^K$, consider the Eisenstein series for $\operatorname{Sp}_r$ given by
\begin{equation}\label{eq:spseisser}
    E(\tau,s,\lambda_f(\varphi)\otimes \Phi_k)\coloneq \det(v)^{-k/2}E(g_\tau,s,\lambda_f(\varphi)\otimes\Phi_k),
\end{equation}
where $g_\tau$ is the standard element of~$\Sp_r(\RR)$ mapping $i 1_r\in\HH_r$ to $\tau$.
Note that the Eisenstein series~\eqref{eq:spseisser} is an automorphic form, since~$\lambda_f(\varphi)\otimes\Phi_k$ is~$K'$-finite.
In fact,~$\Phi_k$ is certainly~$K_\infty'$-finite, and every~$\varphi\in S(V(\A_f)^r)$ is~$K_f'$-finite. The latter property follows from the fact that the Weil representation~$\omega_f$ is a smooth representation of~$G(\A_f)$.

\begin{Rem}
    Suppose that there exists an even unimodular lattice $L\subset V$.
    For $K=C^0(\hat{L})^\times\cap H(\A_f)$ and $\varphi=\operatorname{char}(\hat{L}^r)\subset S(V(\A_f)^r)$, the Eisenstein series~\eqref{eq:spseisser} boils down to a classical real analytic Siegel Eisenstein series
    \[
        E(\tau,s,\lambda_f(\varphi)\otimes\Phi_k)=\sum_{\gamma\in\Gamma_\infty\backslash \operatorname{Sp}_r(\ZZ)}\det(c\tau+d)^{-k}(\det\operatorname{Im}(\gamma \tau))^{\frac{1}{2}(s+\rho_r-k)},
    \]
    see~\cite[Section~IV.2]{kudla-survey} for details.
\end{Rem}

We are now ready to state the geometric version of the Siegel--Weil formula.
This was first proved by Kudla for~$r=1$ in~\cite{kudla-integrals}.
As explained in~\cite[Section~4]{kudla-survey-speccy}, the same proof generalizes to higher codimension.
\begin{teor}[Geometric Siegel--Weil Formula]\label{thm:geomSWF}
Let $\varphi\in S(V(\A_f)^r)^K$ and let $k=1+n/2$.
    If $r<\frac{n-1}{2}$, then
    \begin{equation}\label{eq:GSWF}
        \operatorname{deg}\Theta_r^{\mathrm{coh}}(\tau,\varphi)=
        (-1)^r\operatorname{vol}(X_K,\Omega^n)E(\tau,s_0,\lambda_f(\varphi)\otimes\Phi_k),
    \end{equation}
    where $s_0=\frac{n-r+1}{2}$.
\end{teor}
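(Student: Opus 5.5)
By \eqref{eq:scugsofc}, the left-hand side of \eqref{eq:GSWF} equals $\int_{X_K}\theta_{\mathrm{KM},r}(\tau,\varphi)\wedge\Omega^{n-r}$. The plan is to turn this integral into an ordinary theta integral of the type \eqref{eq:integralaveragegen}, and then apply the adelic Siegel--Weil formula (Theorem~\ref{thm;SiegelweilKR}).

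\textbf{Step 1: unfolding over $X_K$.} We write $X_K(\CC)=H(\QQ)\backslash D\times H(\A_f)/K$. Using the $H(\RR)$-invariance of $\varphi^r_{\mathrm{KM}}$ and of $\Omega$, we rewrite
\[
\int_{X_K}\theta_{\mathrm{KM},r}(\tau,\varphi)\wedge\Omega^{n-r}
\]
as an integral over $H(\QQ)\backslash H(\A)$. (The center $Z$ is handled by the usual normalisation, e.g.\ integrating over $Z(\A)H(\QQ)\backslash H(\A)$.) Fix a base point $z_0\in D$. The integrand at $h=h_\infty h_f$ is the theta function attached to the archimedean Schwartz function
\[
\psi_\infty(x)\coloneq \frac{\varphi^r_{\mathrm{KM}}(x,z_0)\wedge\Omega^{n-r}(z_0)}{\Omega^n(z_0)}\in S(V(\RR)^r),
\]
that is, the coefficient of the volume form at $z_0$. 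Choosing Haar measures compatibly, the total mass gives the factor $\operatorname{vol}(X_K,\Omega^n)$, and we obtain
\[
\operatorname{deg}\Theta^{\mathrm{coh}}_r(\tau,\varphi)=\operatorname{vol}(X_K,\Omega^n)\,\det(v)^{-k/2}\,I\bigl(g_\tau,\varphi\otimes\psi_\infty\bigr).
\]
Since $\varphi^r_{\mathrm{KM}}$ has weight $k=1+n/2$ under $K_\infty'=U(r)$, and $\Omega$ is $H(\RR)$-invariant, the function $\psi_\infty$ is $K_\infty'$-finite and transforms by $\chi_k$. Note that $\psi_\infty$ need not be $H(\RR)$-invariant. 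This is harmless, because integrating over $H(\RR)$ replaces it by its $H(\RR)$-average without changing the theta integral.

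\textbf{Step 2: applying Theorem~\ref{thm;SiegelweilKR}.} Here $\ell=n+2$ and $\mathrm{Witt}(V)\le 2$. The hypothesis $r<\frac{n-1}{2}$ gives $r<\ell-\mathrm{Witt}(V)-1$, so the Weil convergence criterion holds (Proposition~\ref{prop:weilconvcrit}). Moreover $\ell>r+1$, so $\kappa=1$. Theorem~\ref{thm;SiegelweilKR} then gives
\[
I(g,\varphi\otimes\psi_\infty)=E\bigl(g,s_0,\lambda_f(\varphi)\otimes\lambda_\infty(\psi_\infty)\bigr),\qquad s_0=\frac{\ell}{2}-\rho_r=\frac{n-r+1}{2},
\]
which is exactly the value $s_0$ in the statement.

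\textbf{Step 3: identifying the archimedean section.} Since $\psi_\infty$ has $K_\infty'$-type $\chi_k$, the section $\lambda_\infty(\psi_\infty)$ is a multiple of the standard section $\Phi_k(s_0)$. Because $\Phi_k(1,s)=1$, the constant is
\[
\lambda_\infty(\psi_\infty)(1)=\psi_\infty(0)=\frac{\varphi^r_{\mathrm{KM}}(0,z_0)\wedge\Omega^{n-r}(z_0)}{\Omega^n(z_0)}.
\]
So the key computation is the pointwise identity
\[
\varphi^r_{\mathrm{KM}}(0)=(-1)^r\,\Omega^r
\]
in $A^{2r}(D)$. This should follow from the explicit formula for the Kudla--Millson form: at $x=0$ the polynomial part reduces to $\det$ of a matrix of $(1,1)$-forms, which is the $r$-th power of the invariant form $-\Omega$ up to normalisation. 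It is consistent with the definition of $[Z(T,\varphi)]$ for $T=0$, namely $[Z(0,\varphi)]=\varphi(0)[\mathcal{L}^\vee]^r$ with $[\mathcal{L}^\vee]=-[\Omega]$. This yields the sign $(-1)^r$ and completes the identity \eqref{eq:GSWF}.

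\textbf{Main obstacle.} I expect the hardest step to be the justification of Step~1. Swapping the integral over the noncompact $X_K$ with the theta sum, and passing from the cohomology class to the concrete form, both rely on $L^2$-ness and absolute convergence. The $L^2$-property follows from \cite[Section~4]{bruinierzuffetti}, and the bounded-$\eta$ extension from \cite{kudlamillson-tubes}. One also needs care with measure normalisations, namely $\operatorname{vol}(H(\QQ)\backslash H(\A))=1$ versus $\operatorname{vol}(X_K,\Omega^n)$, including the centre $Z$, so that exactly the constant $\operatorname{vol}(X_K,\Omega^n)$ appears.
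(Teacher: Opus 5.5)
Your proposal is correct and follows essentially the same route as the paper: your $\psi_\infty$ is the paper's $\widetilde{\varphi}_{\mathrm{KM}}^r(\cdot,z_0)$, and your unfolding constant is the paper's $C_K=\operatorname{vol}(X_K)$. Likewise, your reduction of the sign to $\varphi_{\mathrm{KM}}^r(0)=(-1)^r\Omega^r$ is exactly what the paper asserts as $\lambda_\infty(\widetilde{\varphi}_{\mathrm{KM}}^r(\cdot,z))=(-1)^r\Phi_k$.

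One small correction concerns why the lack of $H(\RR)$-invariance of $\psi_\infty$ is harmless. The reason is that Theorem~\ref{thm;SiegelweilKR} applies to any $K'$-finite Schwartz function and $\lambda$ is $H(\A)$-invariant; it is not an ``$H(\RR)$-average'' of $\psi_\infty$, which does not make sense over the non-compact group $H(\RR)$.
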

Note that the assumption~$r<\frac{n-1}{2}$ implies that the Weil convergence criterion holds.
It is possible to state a version of Theorem~\ref{thm:geomSWF} assuming only the Weil convergence criterion, replacing the left-hand side of~\eqref{eq:GSWF} with the generating series of volumes of special cycles, i.e.,~the right-hand side of~\eqref{eq:scugsofc}, see~\cite[Theorem~4.1]{kudla-survey-speccy}.

The adelic Siegel--Weil formula considered in Section~\ref{sec:adverSWF} was stated for indefinite quadratic spaces~$V$ of signature~$(b^+,b^-)$, while in the present section we work with spaces of signature~$(n,2)$.
For general signature, the symmetric space associated to the orthogonal group of $V$ is not Hermitian in general, hence there is no Shimura variety arising from it.
However, one can work in the framework of \emph{locally symmetric spaces of orthogonal type}, as in~\cite{kudlamillson}, and define special cycles in this setting, replacing the Hodge class~$[\mathcal{L}]$ by an Euler class.
By~\cite{kudlamillson} the generating series of cohomology classes of special cycles is still a modular form, and for~$b^-$ odd it is cuspidal.
If~$b^+$ is even, the generating series of volumes (computed with respect to some Euler class) is still an Eisenstein series.
In this sense, Theorem~\ref{thm:geomSWF} is a special case of a more general statement.

\begin{proof}[Proof of Theorem~\ref{thm:geomSWF}]
    The idea is to apply the classical Siegel--Weil formula of Section~\ref{sec:adverSWF} to some theta integral.
    To do so, we first need to rewrite the integral $\deg\Theta_r^{\mathrm{coh}}(\tau,\varphi)=\int_{X_K}\theta_{\mathrm{KM},r}(\tau,z,h,\varphi)\wedge\Omega^{n-r}$ in terms of some theta \emph{function} on the group~$H$ rather than a theta \emph{form} on~$D$.

    Since $\varphi_{\mathrm{KM}}^r\wedge\Omega^{n-r}$ is a Schwartz function on~$V(\RR)^r$ with values in top-degree differential forms on~$X_K$, there exists some~$\widetilde{\varphi}_{\mathrm{KM}}^r\in S(V(\RR)^r)\otimes A^0(D)$ such that
    \[
    \varphi_{\mathrm{KM}}^r\wedge\Omega^{n-r}=\widetilde{\varphi}_{\mathrm{KM}}^r\wedge\Omega^n.
    \]
    The Schwartz function~$\widetilde{\varphi}_{\mathrm{KM}}^r$ is invariant with respect to~$H(\RR)$, in the sense that
    \[
    \widetilde{\varphi}_{\mathrm{KM}}^r(hx,hz)=\widetilde{\varphi}_{\mathrm{KM}}^r(x,z)\qquad\text{for all $h\in H(\RR)$.}
    \]
    Furthermore, it is of weight~$k$ with respect to the standard compact maximal of~$\Sp_r(\RR)$ and~$\lambda_\infty(\widetilde{\varphi}_{\mathrm{KM}}^r(\cdot{,}z))=(-1)^r\Phi_k$ for all~$z\in D$.
    By the Siegel--Weil formula (Theorem~\ref{thm;SiegelweilKR}), we then deduce that
    \begin{equation}\label{eq:avinttildevkm}
    \det (v)^{-k/2}I(g_\tau,\varphi\otimes \widetilde{\varphi}_{\mathrm{KM}}^r)
    =(-1)^r E(\tau,s_0,\lambda_f(\varphi)\otimes\Phi_k).
    \end{equation}

    It remains to relate the integral defining~$\deg\Theta_r^{\mathrm{coh}}(\tau,\varphi)$ to~\eqref{eq:avinttildevkm}.
    In fact, they agree up to some constant factor $C_K$ that depends on~$K$, namely
    \begin{equation}\label{eq:inprGSWF}
    \begin{split}
        \deg\Theta_r^{\mathrm{coh}}(\tau,\varphi)&=\int_{X_K}\theta_{\mathrm{KM},r}(\tau,z,h,\varphi)\wedge\Omega^{n-r}
        \\
        &=
        C_K\int_{\operatorname{SO}(V)(\QQ)\backslash\operatorname{SO}(V)(\A)}\theta(\tau,h_\infty h_f,\varphi\widetilde{\varphi}_{\mathrm{KM}}^r)dh,
        \end{split}
    \end{equation}
    where~$z_0$ is any fixed base-point of~$D$ and~$dh$ is the Haar measure on~$\operatorname{SO}(V)(\A)$ giving volume one to $\operatorname{SO}(V)(\QQ)\backslash\operatorname{SO}(V)(\A)$.
    Here we split~$dh=dh_\infty\otimes dh_f$ in Archimedean and finite measures, where~$dh_\infty$ is the measure induced by~$\Omega^n$.
    To conclude the proof, one computes that~$C_K=\mathrm{vol}(X_K)$ from the normalization $\int_{\operatorname{SO}(V)(\QQ)\backslash\operatorname{SO}(V)(\A)}dh=1$ by an unfolding argument, see~\cite[Remark~4.18]{kudla-integrals}.
\end{proof}

\begin{Rem}
    If $X_K$ is not compact and the Weil convergence criterion is not satisfied, namely $r\geq n+1-\mathrm{Witt}(V)$, then $E(\tau,s_0,\lambda_f(\varphi_f)\otimes\Phi_k)$ is non-holomorphic with respect to~$\tau$ in general.
\end{Rem}
\begin{Exa}
    Let $V$ be as in Example~\ref{ex:Hilbertmodsurf}, and let~$K$ be such that~$X_K$ is compact.
    Let~$r=1$, and let~$\varphi$ be the characteristic function of some even lattice~$L\subset V$.
    Then $Z(T,\varphi)$ is (a compact analogue of) a Hirzebruch--Zagier divisor. We then have
    \[
        \operatorname{deg}\Theta_1^{\mathrm{coh}}(\tau,\varphi)=\sum_{T\geq 0}\deg Z(T,\varphi)q^T=E(\tau,1,\lambda_f(\varphi)\otimes \Phi_2),
    \]
    where the last term is the weight $ 2$ Eisenstein series for $\Gamma_0(N)$ and character $\chi_\Delta$, with the same notation as Theorem~\ref{thm:thetatrfor}.
    This can be extended to (non-compact) Hilbert modular surfaces with a careful study of the intersection numbers of special divisors in some toroidal compactification of~$X_K$.
\end{Exa}
\section{The arithmetic Siegel--Weil formula}

We keep the basic notation introduced in Section \ref{sect:5.1}.
Kudla initiated a program to link the arithmetic geometry of special cycles on \emph{integral models} of  
orthogonal (and unitary) Shimura varieties to Siegel (and Hermitian)  modular forms, see e.g.~\cite{Kudla-Annals}, \cite{Kudla-MSRI}, and \cite{KRY-book}. In this section we give a short introduction to this circle of ideas focussing on generating series and the Siegel--Weil formula. For further connections to automorphic~$L$-functions and representations we also refer to the recent work of Chao Li \cite{Li-survey}.

Kudla conjectured (and in certain cases it is known by now) that there should be results which parallel those of the previous section.
In particular, there should be a modularity result as Theorem~\ref{KM} and an arithmetic analogue of the geometric Siegel--Weil formula Theorem~\ref{thm:geomSWF}.
Since the integral models are defined over an affine base scheme (typically~$\operatorname{Spec}(\ZZ)$) one needs a replacement for cohomology and intersection theory that works in this setting. Kudla proposed to work with the arithmetic Chow theory developed by Gillet and Soul\'e (see e.g.~\cite{GilletSoule} and \cite{Soule-book}) as a higher dimensional generalization of Arakelov geometry for arithmetic surfaces.
So we would like to replace $X_K$ by an integral model~$\mathcal{X}_K$ over $\operatorname{Spec}(\ZZ)$, the special cycles $Z(T,\varphi)$ by arithmetic cycles $\widehat{\mathcal{Z}}(T,\varphi)$ in the sense of Gillet--Soul\'e, and instead of studying classes in cohomology $H^{2r}(X_K,\CC)$ we consider classes in arithmetic Chow groups.

We now explain this in somewhat more detail, focussing on the main ideas and glossing over some technical details.
%
%
We assume that $L\subset V$ is a maximal even lattice of square-free discriminant and consider the compact open subgroup  $K=H(\A_f)\cap C^0(\widehat{L})^\times$ of $H(\A_f)$.
By work of Kisin, Vasiu, and Madapusi, the Shimura variety $X_K$ has a canonical integral model $\mathcal{X}_K$, which is a regular flat stack over $\operatorname{Spec}\ZZ$,  see \cite{Kisin-integral}, \cite{Madapusi-Toroidal}. Moreover, there exist toroidal compactifications, which are regular, flat, and projective over $\operatorname{Spec}(\ZZ)$. We fix one of these throughout and denote it by $\overline{\mathcal{X}}_K$.
For a Schwartz--Bruhat function 
\[
    \varphi\in\bigoplus_{\mu\in(L'/L)^r}\operatorname{char}(\mu+\widehat{L}^r)
\] 
and $T\in \mathrm{Sym}_r(\QQ)$, 
Howard and Madapusi constructed in \cite{HM-Compositio} integral models~$\mathcal{Z}(T,\varphi)$ of the special cycles~$Z(T,\varphi)$. Moreover, Garcia and Sankaran \cite{GS18} constructed Green currents $G(T,v,\varphi)$ for these cycles. They depend on a parameter $v$ in the positive definite symmetric real $r\times r$-matrices, which should be  thought of as the imaginary part of a variable $\tau =u+iv\in \HH_r$. The current $G(T,v,\varphi)$ is given by a smooth differential form on~$X_K\setminus Z(T,\varphi)$ with a logarithmic singularity along the cycle. It is a representative for the cohomology class of the cycle, more precisely, it satisfies the identity of currents  
\[
dd^c[G(T,v,\varphi)] +\delta_{Z(T,\varphi)} = [dd^c G(T,v,\varphi)]
\]
on smooth test forms with compact support. Here the current on the right hand-side is given by a smooth form, essentially by the Poincar\'e dual form obtained by suitably averaging over the Kudla--Millson forms. The pair 
\[
\widehat{\mathcal{Z}}(T,v,\varphi) = \left(\mathcal{Z}(T,\varphi), G(T,v,\varphi)\right) 
\]
determines an arithmetic cycle in the sense of Gillet--Soul\'e. We are interested in the classes of these cycles in the arithmetic Chow group  
$\widehat{\mathrm{Ch}}_{\CC}^{r}(\mathcal{X}_K)$.
Note that these classes can be defined for $r$ in the range $1\leq r \leq n+1$, since the \emph{total} dimension of $\mathcal{X}_K$ is $n+1$.

\begin{conj}[Kudla]
    The generating series
    \[
        \Theta_r^{\mathrm{arith}}(\tau,\varphi)=\sum_T [\widehat{\mathcal{Z}}(T,v,\varphi)]q^T,\quad \tau=u+iv\in \HH_r,
    \]
    is a (non-holomorphic) Siegel modular form of weight $k=1+\frac{n}{2}$ \ for $\operatorname{Sp}_r$ with values in~$\widehat{\mathrm{CH}}^r_\CC(\mathcal{X}_K)$.
\end{conj}
If $r=1$ (and $n$ arbitrary) the conjecture  can be proved by combining work of Howard--Madapusi \cite{HM-Asteristque} on special divisors and Borcherds products on the integral model $\mathcal{X}_K$ with results by Ehlen--Sankaran \cite{ES18} and \cite{BF04} on  Green functions for special divisors.
For~$r\geq 2$ it is only known in a few cases in general. In the special case of Shimura curves (where $n=1$), complete results are obtained for~$r=2$ by Kudla--Rapoport--Yang in \cite{KRY-book}. In top degree $r=n+1$ the arithmetic Siegel--Weil formula below provides some further evidence.

The analogous problem can also be studied on the toroidal  compactification  $\overline{\mathcal{X}}_K$. This first requires an extension of the $\widehat{\mathcal{Z}}(T,v,\varphi)$ to the compactification, which involves the addition of  suitable cycles supported at the boundary. It also requires a careful study of the growth behavior of the Green currents at the boundary. In this setting it turns out that one has to work with an extension of arithmetic Chow theory due to Burgos--Kramer--K\"uhn \cite{BKK} which allows for additional  mild singularities of Green currents at the boundary. For $r=1$ such extensions of the arithmetic divisors have been constructed in \cite{BZ-toroidal} and a modularity result follows by combining this with \cite{HM-Asteristque}.
In the similar case of arithmetic special divisors on unitary Shimura varieties of signature $(n,1)$ it was proved in \cite{BHKRY} over imaginary quadratic fields and in \cite{Qiu_Xu_2025} over larger CM fields.

The composition of the natural pushforward map $\widehat{\mathrm{CH}}^{n+1}(\overline{\mathcal{X}}_K)\to \widehat{\mathrm{CH}}^{1}(\operatorname{Spec}(\ZZ))$ with the arithmetic degree map $\widehat{\mathrm{CH}}^{1}(\operatorname{Spec}(\ZZ))\to \RR$ induces an arithmetic degree map
\[
    \widehat{\mathrm{deg}}\colon \widehat{\mathrm{CH}}_\CC^{n+1}(\overline{\mathcal{X}}_K)\rightarrow \CC.
\]
It turns out that  (for regular $T$), in this top degree situation, the special cycles $\widehat{\mathcal{Z}}(T,v,\varphi)$ determine classes on the compactification $\overline{\mathcal{X}}_K$. This naturally leads to  the question: What is the image of the above generating series $\Theta_{n+1}^{\mathrm{arith}}(\tau,\varphi)$?

\begin{conj}[Kudla]
\label{conj:arithsw}
The generating series of arithmetic  special cycles of top degree is given by 
    \begin{align*}
        \widehat{\deg}\ \Theta_{n+1}^{\mathrm{arith}}(\tau,\varphi)&=\sum_{T\in\operatorname{Sym}_{n+1}(\QQ)}\widehat{\deg}\ (\widehat{Z}(T,v,\varphi))q^T\\
        &=C\cdot E'(\tau,0,\lambda_f(\varphi)\otimes \Phi_k),
    \end{align*}
    that is, the central derivative of a Siegel Eisenstein series of genus $r=n+1$ and weight $k$ associated with $\varphi$.
Here, $\tau=u+iv\in \HH_{n+1}$, and $C$ is a certain normalizing constant.
\end{conj}

This conjecture can be viewed as an arithmetic variant of the Siegel--Weil formula, analogous to the geometric Siegel--Weil formula in Theorem \ref{thm:geomSWF}. Here $E(\tau,s,\lambda_f(\varphi)\otimes \Phi_k)$ denotes the Siegel Eisenstein series as in~\eqref{eq:eis} in genus $n+1$ viewed as a function on~$\HH_{n+1}$ as in \eqref{eq:spseisser}.
   It is \emph{incoherent}  in the sense of Kudla, since 
   at the archimedean place it involves the standard section of weight $k$, which is obtained by applying the intertwining operator $\lambda_\infty$ to the Gaussian on the standard positive definite real quadratic space $\RR^{n+2,0}$ of dimension~$n+2$ instead of on the indefinite space $V_\RR\cong \RR^{n,2}$ of signature $(n,2)$. Note that the Hasse invariants of these two real quadratic spaces differ by the sign, while their dimension and quadratic character agree. Hence the Eisenstein series is associated to the incoherent collection $(V_p)_{p\leq \infty}$ of local quadratic spaces given by $V_p=V(\QQ_p)$ for primes~$p<\infty$ and~$V_\infty=\RR^{n+2,0}$. This collection cannot be obtained by taking the base change to~$\QQ_p$ for~$p\leq \infty$ from a single global quadratic space over~$\QQ$, because of the product formula for the Hasse invariant.
   
   The incoherence of the above Eisenstein series implies that it satisfies an odd functional equation under $s\mapsto -s$ and therefore vanishes at the center of symmetry $s=0$. Its derivative with respect to~$s$ at this central point enters in the above formula. Note that~$s_0=\dim(V)/2-\rho_{r}=0$ is exactly the Siegel--Weil point in this case.

In the case $n=0$, where $\mathcal{X}_K$ is a moduli stack over $\ZZ$ of elliptic curves with complex multiplication, the conjecture was proved in \cite{KRY-tiny}, in the $n=1$ case of Shimura curves in \cite{KRY-book}.
In general, one can study the identity of Conjecture \ref{conj:arithsw} coefficientwise and break it up into local identities, called \emph{local} arithmetic Siegel--Weil formulas. For regular non-positive definite $T$, this was first shown in \cite{BY-JEMS}, based on a proof of the archimedean arithmetic local Siegel--Weil formula. 
For positive definite $T$ it was proved in \cite{Li-Zhang-ASW}, where a key contribution was the non-archimedean local Siegel Weil formula.  
A different proof of the archimedean local Siegel--Weil formula was given by Garcia--Sankaran \cite{GS18}. It relies on their construction of Green currents using  Quillen's work on super connections for Hermitian vector bundles \cite{Qu85}, which works in great generality. In particular, it also works for $T$ which are not necessarily regular. However, global results for singular $T$ are only available in rather special cases at present.
In the similar case of unitary Shimura varieties of signature $(n,1)$ analogous results were obtained in \cite{Liu-ANT1}, \cite{Li-Zhang-JAMS}.

\printbibliography
\end{document}